\newcommand{\F}{\mathbb F}
\newcommand{\sign}{\operatorname{sign}}
\newcommand{\SU}{\operatorname{SU}}
\newcommand{\U}{\operatorname{U}}
\newcommand{\FO}{\operatorname{FO}}
\newcommand{\Int}{\operatorname{int}}
\renewcommand{\phi}{\varphi}
\newcommand{\coker}{\operatorname{coker}}
\newcommand{\lfo}{\lambda_{\,\FO}}
\newcommand{\ts}{T^2 \times S^2}
\newcommand{\sss}{S^1 \times S^3}
\newcommand{\cptwo}{\C P^2}
\newcommand{\pp}{\pi} % name for covering space; possibly to change.
\newcommand{\la}{\langle}
\newcommand{\ra}{\rangle}
\newcommand{\jtilde}{\widetilde{J}}
\newcommand{\ftilde}{\widetilde{F}}
\newtheorem{thm}{Theorem}[section]
\newtheorem*{theo}{Theorem}
\newtheorem{lemma}[thm]{Lemma}
\newtheorem{corollary}[thm]{Corollary}
\newtheorem{proposition}[thm]{Proposition}
\newtheorem{introthm}{Theorem}
\theoremstyle{definition}
\newtheorem{definition}[thm]{Definition}
\newtheorem{example}[thm]{Example}
\newcommand{\TL}{Levine-Tristram}
\title{A Levine-Tristram invariant for knotted tori}
\author{Daniel Ruberman}
\address{Department of Mathematics, MS 050\newline\indent Brandeis
University \newline\indent Waltham, MA 02454}
\email{{ruberman@brandeis.edu}}
\begin{document}
\begin{abstract}
We define a new topological invariant of an embedded torus in a homology $S^1\times S^3$, analogous to the \TL\  invariant of a knot. We compare it to an invariant of smooth tori, defined recently by Echeverria using gauge theory for singular connections.
\end{abstract}
\maketitle

\section{Introduction}
The \TL\  invariant, $\sigma_\alpha(K)$, of an odd-dimensional knot~\cite{levine:cobordism,tristram} (see also Conway's survey~\cite{conway:TL}) is a function from the unit circle to the integers.  We will use this invariant in dimension $3$, where it is defined for a knot $K$ in an oriented homology $3$-sphere. By definition, $\sigma_\alpha(Y, K)$ is the signature of a certain Hermitian form (constructed from the Seifert form) that depends on $\alpha \in S^1$.  The \TL\  invariant plays a central role in the calculation of the knot concordance groups in higher dimensions, and has interesting connections to gauge theory in dimension $3$.  

In this paper we construct, by topological means, a $4$-dimensional version of the \TL\ invariant for certain knotted tori, and discuss its relationship to a gauge-theoretic invariant of embedded tori introduced by Echeverria~\cite{echeverria:tori}. To state our result, let us define a homology  $S^1 \times S^3$ to be an oriented $4$-manifold $X$ with the homology of $S^1 \times S^3$. A homology orientation of X (usually suppressed in the notation, but still lurking in the background) is a choice, $\gamma$, of generator of the first cohomology $H^1(X) \cong \Z$. 
\begin{introthm}\label{T:TL}
Let $T$ be a locally flat torus in a homology oriented homology $S^1 \times S^3$ so that the inclusion map induces a surjection on the first homology groups. Then for any $\alpha \in S^1$, there is an invariant $\sigma_\alpha(X,T)\in \Z$ with the following properties.
\begin{enumerate}
\item If $K$ is a knot in a homology 3-sphere $Y$, then $\sigma_\alpha(S^1 \times Y,S^1 \times K) = \sigma_\alpha(Y,K)$.
\item Reversing either the orientation or the homology orientation of $X$ changes the sign of $\sigma_\alpha(X,T)$.
\end{enumerate}
\end{introthm}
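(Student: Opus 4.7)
The plan is to reduce the definition of $\sigma_\alpha(X,T)$ to the classical \TL\ signature of a knot in a 3-manifold, by slicing $X$ transversely to the homology orientation. Since $\gamma\in H^1(X;\Z)\cong\Z$ is a generator, represent it by a smooth map $f\colon X\to S^1$ and let $N=f^{-1}(p)$ for a regular value $p$ chosen so that $N$ is transverse to $T$. Then $N\subset X$ is a closed oriented 3-manifold dual to $\gamma$, and $C:=N\cap T$ is a 1-submanifold of $T$. The surjectivity hypothesis on $H_1(T)\to H_1(X)$ forces $\gamma|_T\in H^1(T)$ to be nontrivial, so $[C]\in H_1(T)$ is a primitive class lying in the kernel of $H_1(T)\to H_1(X)$; after a small perturbation of $f$, one can take $C$ to be a single embedded circle.

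The key observation is that $C$ is null-homologous in $N$. Since $H_2(X)=0$, the torus $T$ bounds a compact oriented 3-submanifold $W\subset X$, and intersecting $W$ transversely with $N$ gives a surface $W\cap N\subset N$ with $\partial(W\cap N)=C$. Thus $(N,C)$ is a knot in a 3-manifold equipped with a Seifert surface $F$, and one obtains a Seifert form $V\colon H_1(F)\times H_1(F)\to\Z$ via linking in $N$; the form is well-defined (up to the usual $S$-equivalence) because any two 2-chains with boundary a given null-homologous cycle differ by an element of $H_2(N)$, with which null-homologous 1-cycles pair trivially. Define
\[
\sigma_\alpha(X,T)=\sign\!\bigl((1-\alpha)V+(1-\bar\alpha)V^{T}\bigr).
\]

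The main obstacle is establishing independence from the choices of $f$ (hence $N$) and of $F$. Two choices of $N$ realizing the same $\gamma$ cobound a region inside $X$, and the associated pairs $(N,C)$ are related by a knot concordance inside a 4-dimensional cobordism in $X$; one must check that the concordance moves on $C$ alter the Seifert form only by hyperbolic stabilizations, which preserve the $\alpha$-signature. Combined with the classical Murasugi $S$-equivalence for the choice of $F$, this is the technical heart of the proof, and the most delicate step will be verifying that nothing in the ambient 4-manifold $X$ obstructs the signature invariance arguments that work in the classical setting.

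Property (1) is then immediate: for $X=S^1\times Y$ and $T=S^1\times K$, take $N=\{p\}\times Y$, so $C=K\subset Y$ and the definition returns $\sigma_\alpha(Y,K)$ on the nose. Property (2) follows because reversing either the orientation of $X$ or the homology orientation $\gamma$ reverses the induced orientation on $N$, negating all linking numbers in $N$, hence the Seifert form $V$ and its associated signature.
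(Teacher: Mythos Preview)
Your setup closely parallels the paper's: both begin by choosing a $3$-manifold $N$ dual to $\gamma$ and intersecting with $T$ to obtain a null-homologous knot $C\subset N$. The paper then performs $0$-surgery on $C$ (simultaneously with torus surgery on $T$) and defines the invariant as the $\rho$-invariant $\rho_{\phi_\alpha}(N_0(C))$, whereas you work directly with the Seifert form of $C$ in $N$. For $N$ a homology sphere these agree, and more generally Neumann's work relates them; so the definitions are compatible. The divergence is in how well-definedness is established.

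The genuine gap is your independence-of-$N$ argument. You assert that two choices $(N_1,C_1)$ and $(N_2,C_2)$ are concordant in a cobordism $W$ inside $X_\gamma$, and that this concordance forces the Seifert forms to differ by hyperbolic stabilizations. But concordance in an arbitrary $4$-manifold cobordism does \emph{not} preserve Levine--Tristram signatures: the classical argument requires the cobordism to have vanishing ordinary and twisted signatures (so that the Seifert forms become Witt-equivalent), and there is no a priori reason $W\subset X_\gamma$ has this property. Indeed, $X_\gamma$ need not be a rational homology $S^3$, so $W$ can carry nontrivial $H_2$, and the twisted signatures of $W$ along the branch annulus are exactly what you must control. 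Your sentence ``verifying that nothing in the ambient $4$-manifold $X$ obstructs the signature invariance arguments'' is precisely the content you have not supplied, and it is the entire difficulty.

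The paper handles this by passing to the torus-surgered manifold $V$ (a cohomology $T^2\times S^2$), where the surgered $3$-manifolds $M_i=(N_i)_0(C_i)$ cobound $W'\subset V_\gamma$. Milnor's duality theorem gives $\operatorname{sign}(W')=0$, and the substantial new ingredient (Proposition~\ref{P:prime-power torus}) is that the further $p^r$-fold cover $V_{\gamma,\eta_d}$ \emph{also} satisfies $3$-dimensional Poincar\'e duality over $\C$, forcing $\operatorname{sign}_{\phi}(W')=0$ for prime-power characters. This is proved by a mod-$p$ cohomology comparison with the genuine $T^2\times S^2$ and the Papadima--Suciu/Pajitnov criterion via the Aomoto complex. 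Only prime-power $\alpha$ are handled directly; the extension to all $\alpha$ uses density and the piecewise-constancy of $\rho$. Your Seifert-form route would need an equivalent covering-space input, which you have not provided.
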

The signature invariant $\sigma_\alpha(X,T)$ in theorem~\ref{T:TL} will be constructed as follows. From a torus as in Theorem~\ref{T:TL}, we will construct an oriented manifold $V$ with the cohomology of $\ts$, with a distinguished class in $H_3(V)$. If $\alpha$ has prime-power order, then $\sigma_\alpha(X,T)$ will be a standard Atiyah-Singer invariant of a $3$ manifold carrying that homology class together with a $\U(1)$ representation determined by $\alpha$. The function $\sigma_\alpha(X,T)$ will be extended to all $\alpha$ using a standard averaging argument. \\[2ex]
\textbf{Acknowledgments:}\; Thanks to Andrei Pajitnov and Alex Suciu for explaining their work on the homology of infinite cyclic covers, to Nikolai Saveliev and Langte Ma for comments on preliminary versions of the ideas in this paper, and to Anthony Conway, Stefan Friedl, and Chuck Livingston for some helpful correspondence. This paper was inspired by the paper of Mariano Echeverria described below, and I thank Mariano for explaining his work as it developed and for comments on an earlier draft of this paper. The  author was partially supported by NSF Grant DMS-1811111.

\subsection{Motivation and background}\label{S:back}
To motivate Theorem~\ref{T:TL} we briefly discuss the interpretation of $ \sigma_\alpha(Y, K)$ in terms of $3$-dimensional gauge theory. This comes from independent work of Herald~\cite{herald:alexander} and Heusener-Kroll~\cite{heusener-kroll}, building on an insightful paper of X.-S. Lin~\cite{lin:rep} and a suggestion of the author (see also~\cite{heusener:curves}). They identified $\frac12 \sigma_{\alpha^2}(K)$ with a (signed) count of flat $\SU(2)$ connections on the knot complement having holonomy conjugate to the matrix
\begin{equation}\label{E:meridian}
\begin{pmatrix}
\bar\alpha & 0\\
0 & \alpha
\end{pmatrix}.
\end{equation}
on the meridian of $K$. The signs are determined using spectral flow, as in Taubes' gauge-theoretic interpretation~\cite{taubes:casson} of the Casson invariant.  In Herald's version, the knot can lie in a homology sphere $Y$, and the Casson invariant~\cite{akbulut-mccarthy,saveliev:casson} of $Y$ enters into the formula.

A recent preprint by Echeverria~\cite{echeverria:tori} constructs an invariant $\lfo(X,T,\alpha)$ of knotted tori, similar to the count used in~\cite{herald:alexander,heusener-kroll}.  (Our notation is not quite the same as in~\cite{echeverria:tori}, where $\alpha$ denotes an element of $(0,1)$; in this paper $\alpha$ is the corresponding element of $S^1$.) The context, extending work of Furuta and Ohta~\cite{furuta-ohta}  is the following. Consider an oriented $4$-manifold $X$ with the homology of $S^1 \times S^3$, with a choice of generator of $H^1(X)$. One is given a torus $T^2$ embedded in $X$ so that the map $H_1(T) \to H_1(X)$ is surjective; let us call that an essential embedding of $T$. Then Echeverria shows how to count flat $\SU(2)$ connections on $X - T$ having specified holonomy \eqref{E:meridian} on the meridian $\mu_T$ of $T$, by interpreting such flat connections as elements of moduli spaces introduced in~\cite{kronheimer-mrowka:I,kronheimer-mrowka:II}.

There are some restrictions on the scope of Echeverria's invariant. One is a restriction on the topology of $X - T$, analogous to the condition introduced by Furuta and Ohta~\cite{furuta-ohta} in defining a Casson-type invariant  $\lfo$ associated to a homology $\sss$.
Moreover, $\lfo(X,T,\alpha)$ is defined only for $\alpha$ of finite order, in other words of the form  $\alpha=  e^{2 \pi i q/n}$.  The restriction stems from the use of orbifold instanton theory in the basic setup.  In the product case 
\[
(X,T) = S^1 \times (Y,K),
\]
Echeverria~\cite[Corollary 41]{echeverria:tori} shows that 
\begin{equation}\label{E:prod}
\lfo(X,T,\alpha) = 8 \lambda(Y) +  \sigma_{\alpha^2}(Y,K)
\end{equation}
where $\lambda(Y)$ is the Casson invariant~\cite{akbulut-mccarthy} of $Y$. 

Echeverria~\cite{echeverria:tori} posed the problem of finding a topological invariant $\sigma_\alpha(X,T)$, defined in a fashion similar to the \TL\  invariant, that satisfies a version of \eqref{E:prod} with 
$\lambda(Y)$ replaced by $\lfo(X)$ (when defined) and the knot signature replaced by 
$\sigma_\alpha(X,T)$. One might even hope to establish such a relation via methods similar to~\cite{herald:alexander,heusener-kroll}. Such a \TL\  invariant should have the property that in the product case we have $\sigma_\alpha(X,T) = \sigma_\alpha(Y,K)$. In this paper, we construct such a topological invariant $\sigma_\alpha(X,T)$.  

Subsequent to the posting of the first version of this paper, Langte Ma~\cite{ma:signature} has shown that our topological invariant is related to Echeverria's gauge-theoretic invariant $\lfo(X,T,\alpha)$ in the hoped-for fashion.
\begin{theo}[Ma 2021] When both invariants $\lfo(X,T,\alpha)$ and $\lfo(X)$ are defined, 
\begin{equation}\label{E:conj}
\lfo(X,T,\alpha) = 8\lfo(X) + \sigma_{\alpha^2}(X,T) 
\end{equation} 
\end{theo}
%Note that in the product case, Theorem~\ref{T:TL} and Equation \eqref{E:prod} say that this conjecture holds. Unfortunately, the conjecture doesn't hold in general, as we will see in Section~\ref{S:branched}.

To state the main theorem, we make use of a special case of an invariant of Atiyah-Singer~\cite{atiyah-singer:III}. There is a good deal of variation in the terminology and notation for such invariants; we follow the version in~\cite{aps:II}.  Let $Y$ be an oriented $3$-manifold and choose a representation $\phi: \pi_1(Y) \to \U(1)$. Assume that there is an oriented $4$-manifold $W$ with boundary $W$ and that the representation $\phi$ extends to $\pi_1(W)$. Then define
\begin{equation}\label{E:rho}
\rho_\phi(Y) = \sign(W) - \sign_\phi(W)
\end{equation}
where $\sign_\phi(W)$ denotes the signature of the Hermitian intersection form on the twisted homology $H_2(W;\C_\phi)$. For example, if $Y$ is $0$-framed surgery on an oriented knot $K$ with meridian $\mu_K$ and $\phi_\alpha(\mu_K) = \alpha$, then $\rho_{\phi_\alpha}(Y)$ is~\cite{viro:links,kauffman-taylor:links} exactly the \TL\  invariant $ \sigma(K,\alpha)$.

Now we can outline the definition of our invariant $\sigma_\alpha(X,T)$, with the proof that it is well-defined discussed in the following sections. Let $T$ be an oriented embedded torus, as in Theorem~\ref{T:TL}.  One can perform~\cite[\S 2.3]{ma:surgery} the analogue of `$0$-framed surgery' on $X$ along this torus to obtain a manifold $V$ having the homology of $T^2 \times S^2$.  (This process is not canonical, but in Section~\ref{S:gluing} we will show that the resulting invariant is independent of all choices.) A simple but crucial observation is that in fact $V$ is a cohomology $\ts$, in other words has the same cohomology ring as $\ts$. A second observation is that the homology orientation $\gamma \in H^1(X)$ gives rise to a primitive cohomology class in $H^1(V)$; we will continue to call this class $\gamma$. 

Let $g \in H_1(T)$ be a homology class for which $\la \gamma, g \ra = 1$. Then $g$, together with the meridian $\mu = \mu_T$ give a basis for $H_1(V)$.  
For any $\alpha \in S^1$, consider the $\U(1)$ representation $\phi$ with $\phi(g) = 1$ and $\phi(\mu) = \alpha$. Choose a connected oriented $3$-manifold $M\subset V$ that is Poincar\'e dual to $\gamma$.  Then $\phi$ restricts to a $\U(1)$ representation of $\pi_1(M)$; we continue use the same notation for the restriction. Note that while the choice of $g$ is not canonical, the restriction of $\phi$ to $\pi_1(M)$ is still uniquely determined.

The definition then proceeds in two steps. First we show (Proposition~\ref{P:prime-power}) that if $\alpha$ has prime-power order, then $\rho_{\phi}(M)$ is independent of the choice of $3$-manifold dual to $\gamma$.  The second step will be to extend this definition to arbitrary characters $\phi$, using the fact that $\rho_{\phi}(M)$ is a piecewise constant function of $\alpha$, with finitely many jumps.  

As it turns out, it is not essential that $V$ come from surgery on a torus, so we start by defining invariants for a pair $(V,\gamma)$ consisting of an arbitrary cohomology $\ts$ and a primitive class $\gamma \in H^1(V)$.  We detail the properties needed in the next section.

\subsection{Notation and conventions}\label{S:notation} Throughout the paper, $V$ will denote an oriented cohomology $\ts$.  We fix a primitive element $\gamma \in H^1(V)$ and choose a second class $\eta$ so that $\{\gamma, \eta\}$ is a basis for $H^1(V)$.  Write  $\{g,h\}$ for the dual basis for $H_1(V)$. The condition on the cohomology of $V$ amounts to saying that $\gamma \cup \eta$ generates a summand of $H^2(V)$.  A complementary summand is generated by a class $\zeta \in H^2(V)$ so that $\zeta \cup \zeta = 0$ and $\gamma \cup \eta \cup \zeta$ gives the orientation of $V$. 

In the course of the proof, we will need to compare the cohomology of $V$ with that of the actual $\ts$.  To that end, we denote the corresponding cohomology classes in $\ts$ as follows.  The basis for $H^1(\ts)$ is written\ as $\{\xi,\beta\}$; we can assume that $\la \xi \cup \beta, [T]\ra = 1$, where $[T]$ is the homology class of the torus in $H_2(\ts)$. The distinguished element of $H^1(\ts)$ will be $\xi$. The other generator $\delta \in H^2(\ts)$ is the Poincar\'e dual of $[T]$.

We will denote by $A_x$ the infinite cyclic covering space of a space $A$ determined by a cohomology class $x\in H^1(A;\Z)$. Similarly, if $\beta \in H^1(A;\Z_d)$ then $A_x$ will denote the corresponding finite cyclic covering.  If $x \in  H^1(A;\Z)$ then $x_d$ will denote its reduction mod $d$, lying in $H^1(A;\Z_d)$. For the rest of the paper, $p$ will denote a prime, and $\F_p$ the field with $p$ elements.  

\section{Infinite cyclic coverings}\label{S:milnor}
Our approach is based on Milnor's Duality Theorem~\cite{milnor:covering} which states that the infinite cyclic cover of an $(n+1)$-manifold has, under some circumstances, the homological properties of a closed $n$-manifold. We will also make use of an extension of this principle~\cite{ruberman:ds,ruberman:ds2} that says that under appropriate hypotheses, the infinite cyclic cover of a $2n$-manifold can have $\rho$ invariants for a $\U(1)$ representation $\phi$ as if it were a closed $(2n-1)$-manifold.  The main point to establish is that the cyclic cover (corresponding to $\phi$) of the infinite cyclic cover also satisfies Poincar\'e duality. One might say that this extends Milnor's theorem to certain twisted coefficients. 

In this section, we prove such an extended duality theorem for $V$, a cohomology $\ts$. We need a preliminary lemma.
\begin{lemma}\label{L:map}
There is a degree-one map from $V$ to $\ts$ inducing isomorphisms on integral homology and cohomology, preserving the distinguished generators of the first cohomology group.
\end{lemma}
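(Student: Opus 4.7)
The plan is to build a map $F = (g, f)\colon V \to \ts$ as a product of two factors: $g\colon V \to T^2$ realizing the classes $\gamma, \eta \in H^1(V)$, and $f\colon V \to S^2$ realizing $\zeta \in H^2(V)$. Once both factors are in hand, verifying that $F$ has the stated properties reduces to checking pullbacks on cohomology generators.

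Constructing $g$ is immediate: since $T^2 = K(\Z \oplus \Z, 1)$, homotopy classes $[V, T^2]$ are classified by $H^1(V; \Z \oplus \Z)$, so the pair $(\gamma, \eta)$ determines a map $g\colon V \to T^2$ satisfying $g^*(\xi) = \gamma$ and $g^*(\beta) = \eta$. In particular the distinguished generator of $H^1$ is preserved.

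The main step, and the principal obstacle, is constructing $f\colon V \to S^2$ with $f^*$ of the generator equal to $\zeta$. The crucial input is the consequence of the cohomology $\ts$ hypothesis that $\zeta \cup \zeta = 0$. The strategy is to represent the Poincar\'e dual of $\zeta$ in $H_2(V;\Z)$ by a closed oriented locally flat embedded surface $\Sigma \subset V$; such a representative exists for any integral $2$-class in a topological $4$-manifold. The normal bundle of $\Sigma$ is an oriented $2$-plane bundle whose Euler class is the self-intersection $\zeta \cup \zeta$ evaluated against $[V]$, hence vanishes. A framing of this trivial normal bundle then produces, via the Pontryagin-Thom collapse on a tubular neighborhood of $\Sigma$, a map $f\colon V \to S^2$ whose pullback of the generator is exactly $\zeta$.

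Finally, the product map $F = (g, f)$ satisfies $F^*(\xi) = \gamma$, $F^*(\beta) = \eta$, and $F^*(\delta) = \zeta$, so that $F^*(\xi \cup \beta \cup \delta) = \gamma \cup \eta \cup \zeta$, which by our choice of $\zeta$ equals the orientation class of $V$. Hence $F$ has degree $+1$. Moreover, $F^*$ carries a free basis of $H^*(\ts)$ to a free basis of $H^*(V)$, making it an isomorphism on integral cohomology and, via universal coefficients and Poincar\'e duality for these torsion-free groups, on homology as well.
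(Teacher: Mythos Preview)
Your proof is correct and follows the same overall architecture as the paper's: build the map as a product $V \to T^2 \times S^2$, with the $T^2$ factor classified by $(\gamma,\eta)$ and the $S^2$ factor realizing $\zeta$, then read off degree and the cohomology isomorphism from the pullbacks of generators. The one substantive difference is in how the map $f\colon V \to S^2$ is produced. The paper argues homotopy-theoretically: since $\cptwo$ is the $5$-skeleton of $K(\Z,2)$, the class $\zeta$ is realized by a map $g_1\colon V \to \cptwo$; the calculation $\deg g_1 = \langle \zeta \cup \zeta, [V]\rangle = 0$ then shows $g_1$ may be homotoped off a point and hence compressed into $S^2 \subset \cptwo$. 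You instead work geometrically: represent the Poincar\'e dual of $\zeta$ by an embedded surface, observe its normal bundle is trivial (again because $\zeta^2 = 0$), and apply a Pontryagin--Thom collapse. Both routes pivot on the same key fact $\zeta \cup \zeta = 0$. The paper's version is purely homotopy-theoretic and uses nothing about $V$ beyond its cohomology ring and dimension; yours is more concrete but, in the locally flat topological setting relevant here, leans on the existence of normal bundles and tubular neighborhoods for locally flat surfaces in topological $4$-manifolds, which is true but perhaps deserves a remark or citation.
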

\begin{proof}
First choose maps $f_1$ and $f_2$ from $V$ to $S^1$ such that $f_1^*(\xi) = \gamma$ and $f_1^*(\beta) = \eta$. The product of these two gives a map $f$ from $V$ to $T^2$. We claim that there is a map $h: V \to S^2$ such that $h^*$ of the generator of $H^2(S^2)$ is $\zeta$. By standard obstruction theory, there is a map $h_1: V \to \cptwo$ such that $h_1^*(\delta') = \zeta$, where $\delta'$ is the generator of $H^2(\cptwo)$.
But the degree of $h_1$ must be $0$, as the following calculation shows.
\begin{equation*}
\begin{aligned}
\deg(h_1) =  \la \delta' \cup \delta' , \deg(h_1) [\cptwo] \ra & = \la \delta' \cup \delta' , (h_1)_*([V]) \ra \\
& = \la h_1^*(\delta' \cup \delta' ), [V] \ra =  \la \zeta \cup \zeta, [V] \ra = 0.
\end{aligned}
\end{equation*}
It follows (possibly~\cite{pontryagin:homotopy} is the original reference) that $h_1$ is homotopic to a map that misses a point of $\cptwo$, and hence is homotopic to a map $h: V \to S^2$ with the desired property.  Now the map $f_1 \times f_2 \times h$ is a degree one map that induces isomorphism on homology (and hence on cohomology).
\end{proof}

Now we consider the infinite cyclic covering space $\pp:V_\gamma\to V$ induced by the class $\gamma \in H^1(V)$.  The homology groups (resp.~mod $p$ homology groups) of $V_\gamma$ are modules over the ring $\Lambda = \Z[t,t^{-1}]$ (resp.~$\Lambda_p = \F_p[t,t^{-1}]$). Note that while $\Lambda_p$ is a PID, while $\Lambda$ is not. 
\begin{lemma}\label{L:Z-cover}
$V_\gamma$ satisfies $3$-dimensional Poincar\'e duality with coefficients in any field $\F$. Moreover, if $M\subset V$  is Poincar\'e dual to $\gamma$, then it lifts to $V_\gamma$ and represents the fundamental class of $V_\gamma$.
\end{lemma}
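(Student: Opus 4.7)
The plan is to apply Milnor's duality theorem \cite{milnor:covering}: once $H_*(V_\gamma;\F)$ is known to be finitely generated as an $\F$-vector space, Milnor's theorem produces a Milnor fundamental class and $3$-dimensional Poincar\'e duality isomorphisms for $V_\gamma$. The proof therefore has two steps: verifying finite generation, and identifying the fundamental class with $[\widetilde M]$.

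First, I would verify finite generation using Lemma~\ref{L:map}. The degree-one map $f\colon V\to\ts$ induces an isomorphism of cohomology rings over $\F$ with $f^*\xi=\gamma$, and $f$ lifts to a map of infinite cyclic covers $\tilde f\colon V_\gamma\to(\ts)_\xi\simeq S^1\times S^2$. Comparing the Wang exact sequences
\[
\cdots \to H_k(V_\gamma;\F) \xrightarrow{t-1} H_k(V_\gamma;\F) \to H_k(V;\F) \to H_{k-1}(V_\gamma;\F) \to \cdots
\]
with its counterpart for $\ts$ (where $t-1$ acts trivially because the cover is a product) and using that $f_*$ is an isomorphism on $H_*(\cdot;\F)$, one aims to show that the $\Lambda_\F=\F[t,t^{-1}]$-free rank of $H_k(V_\gamma;\F)$ vanishes in every degree. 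Here one uses that $H_*(V_\gamma;\F)$ is automatically a finitely generated $\Lambda_\F$-module (since $V$ is a finite CW complex), and exploits Poincar\'e duality for $V$ with local $\Lambda_\F$-coefficients to pin down the module structure. The free ranks in degrees $0$ and $4$ vanish for free because $V_\gamma$ is connected and noncompact; the cohomology ring of $V$ inherited from $\ts$ via $f$ (in particular the acyclicity of cup product with $\gamma$ on $H^*(V;\F)$, a direct computation in the exterior algebra) then forces the free ranks in the middle degrees to vanish as well. Consequently $H_*(V_\gamma;\F)$ is $\Lambda_\F$-torsion, hence finite-dimensional over $\F$.

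Given finite generation, Milnor's theorem yields a Milnor fundamental class $[V_\gamma]\in H_3(V_\gamma;\F)$ together with cap-product isomorphisms $H^k(V_\gamma;\F)\cong H_{3-k}(V_\gamma;\F)$, establishing the $3$-dimensional Poincar\'e duality.

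For the second step, represent $\gamma$ by a locally flat map $V\to S^1$ and take $M=\gamma^{-1}(x)$ for a regular value $x\in S^1$, arranged to be connected since $\gamma$ is primitive. Then $M$ is a closed oriented 3-submanifold Poincar\'e-dual to $\gamma$ in $V$. Since $V_\gamma$ is the pullback of $\R\to S^1$ along the chosen representative $V\to S^1$, the preimage of $M$ in $V_\gamma$ is the disjoint union $\bigsqcup_{n\in\Z} t^n\widetilde M$ of translates of a single lift $\widetilde M$. Cutting $V_\gamma$ along $\widetilde M$ separates it into two noncompact halves, and a Mayer-Vietoris computation over this splitting identifies $[\widetilde M]$ as the generator of $H_3(V_\gamma;\F)$ realizing the Milnor fundamental class.

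The main obstacle is the first step: one must force the vanishing of the $\Lambda_\F$-free ranks of $H_k(V_\gamma;\F)$ in the middle degrees. This requires combining the Wang sequence, Poincar\'e duality with $\Lambda_\F$-coefficients, and the cohomology ring structure of $V$ supplied by Lemma~\ref{L:map} in a careful dimension count. The other steps are essentially formal once this is in place.
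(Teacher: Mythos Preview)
Your proposal is correct and follows essentially the same route as the paper. Both arguments reduce to Milnor's duality theorem via finite generation of $H_*(V_\gamma;\F)$, and both identify the key input as the acyclicity of the cup-product complex $H^j(V;\F)\xrightarrow{\cup\gamma}H^{j+1}(V;\F)$ (which you observe directly from the ring structure inherited via Lemma~\ref{L:map}); the paper simply invokes this as the \emph{Aomoto complex} criterion of Papadima--Suciu~\cite{papadima-suciu:spectral} and Pajitnov~\cite{pajitnov}, whereas you sketch the Wang-sequence dimension count that underlies that criterion. Likewise, for the fundamental class the paper cites Kawauchi--Matumoto~\cite{kawauchi-matumoto:estimate} rather than running the Mayer--Vietoris argument you outline. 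So your ``main obstacle'' is already packaged as a citable result, and you may streamline the write-up accordingly.
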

\begin{proof}
According to Milnor's Duality Theorem~\cite{milnor:covering} the conclusions of the theorem hold whenever $H_*(V_\gamma;\F)$ is finitely generated. (The fact that $M$ carries the fundamental class of $V_\gamma$ is implicit in Milnor's paper, and is proved explicitly in~\cite{kawauchi-matumoto:estimate}.)  Papadima-Suciu~\cite{papadima-suciu:spectral} and Pajitnov~\cite{pajitnov} (see also~\cite{farber:1-forms}) show that this finite generation holds if the {\em Aomoto complex}  
\begin{equation}\label{E:coho}
\xymatrix{
\cdots \ar[r] & H^j(V;\F) \ar[r]^{\cup \gamma} & H^{j+1}(V;\F) \ar[r]^{\cup \gamma} & H^{j+2}(V;\F) \ar[r] & \cdots
}
\end{equation}
is exact.  (Rather strikingly, an analytic version of this fact, which applies when $\F = \R$ or $\C$, is part of Taubes' theory of Fredholm complexes for manifolds with periodic ends~\cite[Theorem 3.1]{taubes:periodic}; compare~\cite{mrowka-ruberman-saveliev:derham}.) Exactness of \eqref{E:coho} for $\ts$ follows directly from the K\"unneth theorem; by hypothesis it holds for $V$ as well.
\end{proof}

The main result of this  is a version of Lemma~\ref{L:Z-cover} for some further $d$-fold covering spaces of $V_\gamma$ where $d=p^r$ and $p$ is a prime.  We continue (and slightly extend) the notation from above. 
Consider the covering space $V_{\eta_{d}} \to V$ where $\eta_{d}$ is the mod ${d}$ reduction of the generator $\eta \in H^1(V)$. This fits into a diagram of covering spaces where the vertical arrows are infinite cyclic coverings and the horizontal ones are $\Z_{p^r}$ coverings.
\[\xymatrixcolsep{6em}
\xymatrix{
V_{\gamma,\eta_{d}} \ar[r]^{p^r\text{-fold cover}} \ar[d]_{\Z-\text{cover}} & V_\gamma\ar[d]^{\Z-\text{cover}} \\
V_{\eta_{d}} \ar[r]^{p^r\text{-fold cover}} & V
}
\]
\begin{proposition}\label{P:prime-power torus}
The homology groups $H_*(V_{\gamma,\eta_{d}}; \C)$ are finite dimensional. In particular, $V_{\gamma,\eta_{d}}$ satisfies $3$-dimensional Poincar\'e duality with complex coefficients. 
\end{proposition}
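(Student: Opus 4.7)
The plan is to apply the Papadima--Suciu--Pajitnov criterion (the main input to Lemma~\ref{L:Z-cover}) to the infinite cyclic cover $V_{\gamma,\eta_d}\to V_{\eta_d}$ determined by the pullback class $\tilde\gamma\in H^1(V_{\eta_d};\Z)$. Since $V_{\eta_d}$ is a finite cover of the closed $4$-manifold $V$, it is itself a closed (hence finite CW) $4$-manifold, so finite-dimensionality of $H_*(V_{\gamma,\eta_d};\C)$ is equivalent to exactness of the Aomoto complex $(H^*(V_{\eta_d};\C),\cup\tilde\gamma)$; the Poincar\'e duality assertion then follows from Milnor's theorem as in Lemma~\ref{L:Z-cover}. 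Since $d=p^r$ is invertible in $\C$, Maschke's theorem yields the decomposition
\[
H^*(V_{\eta_d};\C) \;\cong\; \bigoplus_{\psi\in\widehat{\Z_d}} H^*(V;\C_\psi)
\]
as graded $H^*(V;\C)$-modules, with $\cup\tilde\gamma$ acting as $\cup\gamma$ on each summand. This reduces the claim to showing that $(H^*(V;\C_\psi),\cup\gamma)$ is exact for every character $\psi$ of $\Z_d$, and the case $\psi=1$ is Lemma~\ref{L:Z-cover} with $\mathbb F=\C$.

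For nontrivial $\psi$, first observe that $\psi$ is trivial in the $\gamma$-direction, i.e., $\psi(g)=1$ for $g$ dual to $\gamma$, since $\psi$ factors through $\eta_d$. Representing $\gamma$ by a fibration $V\to S^1$ exhibits $V$ as the mapping torus of a self-diffeomorphism $\varphi\colon M\to M$ of the Poincar\'e dual $3$-manifold $M$. The Wang sequence with coefficients in $\C_\psi$ then yields the short exact sequence
\[
0\to\coker\bigl((\varphi^*-1)|_{H^{k-1}(M;\C_{\psi|_M})}\bigr)\to H^k(V;\C_\psi)\to\ker\bigl((\varphi^*-1)|_{H^k(M;\C_{\psi|_M})}\bigr)\to 0,
\]
which splits over $\C$. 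In this splitting, $\cup\gamma$ annihilates the $\coker$-summand and carries the $\ker$-summand in degree $k$ to the $\coker$-summand in degree $k+1$ via the canonical composition $\ker\hookrightarrow H^k(M;\C_{\psi|_M})\twoheadrightarrow\coker$. Direct inspection then gives $\ker(\cup\gamma)=\im(\cup\gamma)$ in every degree.

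The most delicate point is ensuring that the canonical map $\ker(\varphi^*-1)\to\coker(\varphi^*-1)$ is an isomorphism, i.e., that $\varphi^*$ acts semisimply at eigenvalue~$1$ on $H^*(M;\C_{\psi|_M})$. The cohomology $\ts$ hypothesis forces $\varphi^*$ to act as the identity on the integral cohomology of $M$ (by the untwisted Wang sequence), and combined with Lemma~\ref{L:map} one expects this rigidity to extend to the relevant twisted cohomology, for example by passing to the finite cover of $M$ through which $\psi|_M$ factors. An alternative route specific to $d=p^r$ is to work first over $\F_p$: since $\F_p[\Z_{p^r}]=\F_p[t]/(t-1)^{p^r}$ is local with nilpotent maximal ideal, filtering by powers of $(t-1)$ has trivial associated graded, and iteratively applying Lemma~\ref{L:Z-cover} with $\mathbb F=\F_p$ shows $H_*(V_{\gamma,\eta_d};\F_p)$ is finite-dimensional; one can then promote this to $\C$ via the $\Z[t,t^{-1}]$-module structure on the integral homology.
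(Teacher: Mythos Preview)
Your primary argument has two genuine gaps.

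First, you assume that $\gamma$ can be represented by a fibration $V\to S^1$, so that $V$ is the mapping torus of some diffeomorphism $\varphi\colon M\to M$. Nothing in the hypothesis that $V$ is a cohomology $\ts$ guarantees this; a closed $4$-manifold with a primitive class in $H^1$ need not fiber over the circle. Without a fibration the Wang sequence does not take the form you write, with $H^*(M;\C_{\psi|_M})$ as its terms: the Milnor sequence for the cover $V_\gamma\to V$ involves $H^*(V_\gamma;\,\cdot\,)$ instead, and $V_\gamma$ is not in general homotopy equivalent to $M$, nor is its twisted cohomology known to be finite-dimensional at this stage.

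Second, even granting a fibration, you explicitly leave open the semisimplicity of $\varphi^*$ at eigenvalue $1$ on $H^*(M;\C_{\psi|_M})$. The remark that ``one expects this rigidity to extend'' is not a proof, and there is no evident mechanism transferring semisimplicity from untwisted to twisted cohomology. Note too that this approach makes no use of the prime-power hypothesis on $d$, which should be a warning sign.

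By contrast, the $\F_p$ route you sketch in your final sentence is essentially correct and is close to what the paper does. Spelled out: since $V_{\gamma,\eta_d}\to V_\gamma$ is a $\Z_d$-cover, one has $H_*(V_{\gamma,\eta_d};\F_p)\cong H_*(V_\gamma;L)$ where $L$ is the local system with fibre $\F_p[\Z_d]=\F_p[t]/(t-1)^d$; the filtration of $L$ by powers of $(t-1)$ has every subquotient equal to the trivial system $\F_p$, so the associated long exact sequences together with Lemma~\ref{L:Z-cover} over $\F_p$ give $\dim_{\F_p}H_*(V_{\gamma,\eta_d};\F_p)<\infty$. The promotion to $\C$ is then exactly \cite[Lemma~6]{casson-gordon:orsay}, applied to $V_{\gamma,\eta_d}$ viewed as an infinite cyclic cover of the closed manifold $V_{\eta_d}$; this is also how the paper finishes. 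The paper's route differs only in how it reaches the $\F_p$ statement: it first proves the standalone Lemma~\ref{L:prime torus} (that $V_{\eta_d}$ is an $\F_p$-cohomology $\ts$, via a Milnor-sequence computation and the degree-one map of Lemma~\ref{L:map}), and then invokes the Aomoto criterion over $\F_p$ for the cover $V_{\gamma,\eta_d}\to V_{\eta_d}$. Your filtration argument is more direct for the purpose at hand; the paper's lemma yields more structural information about $V_{\eta_d}$ than is strictly needed here.

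So: discard the Wang-sequence approach and make the $\F_p$ filtration argument the actual proof. Replace the phrase ``iteratively applying Lemma~\ref{L:Z-cover}'' with the precise statement that a single application of Lemma~\ref{L:Z-cover} over $\F_p$ bounds each graded piece, and the filtration long exact sequences assemble these bounds.
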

Proposition~\ref{P:prime-power torus} will be proved by comparing $H_*(V_{\gamma,\eta_{d}}; \C)$ with $H_*(V_{\gamma,\eta_{d}}; \F_p)$, where $\F_p$ denotes the field with $p$ elements. 
%To understand the latter cohomology groups, we use an inductive argument based on the following lemma. 
The first step is to investigate the mod $p$ cohomology of $V_{\eta_{d}}$.
\begin{lemma}\label{L:prime torus}
$V_{\eta_{d}}$ is an $\F_{p}$ cohomology $\ts$.
\end{lemma}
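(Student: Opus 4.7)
My plan is to reduce the lemma to a Betti-number count and then compute via the infinite cyclic cover $V_\eta$, exploiting the characteristic-$p$ identity $t^d - 1 = (t-1)^{p^r}$. By Lemma~\ref{L:map} there is a degree-one map $f\colon V\to \ts$ inducing an isomorphism on integer (co)homology with $f^*\xi = \gamma$ and $f^*\beta = \eta$; hence $f^*\beta_{d} = \eta_{d}$, so $V_{\eta_d}$ is the pullback along $f$ of $(\ts)_{\beta_d}$. Since $(\ts)_{\beta_d}$ is itself diffeomorphic to $\ts$, this gives a degree-one lift $\tilde f\colon V_{\eta_d}\to \ts$. Poincar\'e duality on closed oriented $4$-manifolds makes $\tilde f^*\colon H^*(\ts;\F_p) \to H^*(V_{\eta_d};\F_p)$ injective, so it suffices to show the total $\F_p$-Betti number of $V_{\eta_d}$ equals $8$; the injection then forces an isomorphism of graded rings.

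To compute these Betti numbers I would first analyze $H_*(V_\eta;\F_p)$ as a $\Lambda_p$-module. The Aomoto complex of $V$ with $\cup\eta$ is exact (the calculation depends only on the ring structure, which matches that of $\ts$), so Lemma~\ref{L:Z-cover} applied with $\eta$ in place of $\gamma$ shows $H_*(V_\eta;\F_p)$ is finite-dimensional. Next, lift $f$ to a $\Z$-equivariant chain map $f_\eta\colon C_*(V_\eta;\F_p) \to C_*((\ts)_\beta;\F_p)\simeq C_*(S^1\times S^2;\F_p)$. The mapping cone $C(f_\eta)$ is a $\Lambda_p$-complex with finite-dimensional $\F_p$-homology, and the short exact sequence $0\to C(f_\eta)\xrightarrow{t-1} C(f_\eta)\to C(f)\to 0$ together with $H_*(C(f);\F_p)=0$ (since $f$ is a mod $p$ homology isomorphism) forces $t-1$ to act invertibly on $H_*(C(f_\eta);\F_p)$. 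Splitting the mapping cone LES into $(t-1)$-primary and $(t-1)$-coprime summands (a functorial decomposition for finite-dimensional $\Lambda_p$-modules) and using the vanishing of the $1$-primary part of $H_*(C(f_\eta))$, we conclude $H_k(V_\eta;\F_p)^{(t-1)} \cong H_k((\ts)_\beta;\F_p) = \F_p$ for $k=0,1,2,3$, with $t-1$ acting as zero on this summand. Hence $\ker((t-1)^n)$ on $H_k(V_\eta;\F_p)$ equals this $1$-dimensional summand for every $n\ge 1$.

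Finally, the infinite cyclic cover $V_\eta \to V_{\eta_d}$ has deck generator $t^d$, so the Milnor exact sequence reads
\[
\cdots \to H_k(V_\eta;\F_p) \xrightarrow{\;t^d-1\;} H_k(V_\eta;\F_p) \to H_k(V_{\eta_d};\F_p) \to H_{k-1}(V_\eta;\F_p) \to \cdots.
\]
In characteristic $p$ the boundary operator is $(t-1)^{p^r}$, whose kernel and cokernel on each $H_k(V_\eta;\F_p)$ coincide with the $1$-primary summand computed above. Reading off dimensions gives $\dim H_k(V_{\eta_d};\F_p) = 1,2,2,2,1$ for $k=0,\dots,4$, matching $\ts$; combined with the degree-one lift $\tilde f$ this completes the proof. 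The main technical step is the mapping cone comparison: the raw Milnor sequence for $V_\eta\to V$ only gives $\dim\ker(t-1)=1$, whereas the characteristic-$p$ Milnor sequence for $V_\eta\to V_{\eta_d}$ requires $\dim\ker(t-1)^{p^r}=1$, so higher Jordan blocks at the eigenvalue $1$ must be ruled out, and the acyclicity of $C(f)$ is precisely what enforces this.
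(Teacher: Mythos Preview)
Your proof is correct and reaches the same conclusion as the paper, but the route is genuinely different in one key respect: how you rule out higher Jordan blocks at the eigenvalue $1$ on $H_*(V_\eta;\F_p)$.

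The paper invokes an external result of Papadima--Suciu (their Proposition~9.4): acyclicity of the Aomoto complex for $(V,\eta)$ forces every $(t-1)$-primary summand of $H_*(V_\eta;\F_p)$ to be $\Lambda_p/(t-1)$ rather than $\Lambda_p/(t-1)^j$ with $j>1$. The paper then runs the Milnor sequence only in degree~$1$, and recovers the remaining Betti numbers from Euler characteristic and Poincar\'e duality; the degree-one lift $f_{\eta_d}$ enters only at the end to pin down the cup product.

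You instead exploit the comparison map $f$ from Lemma~\ref{L:map} throughout. Your mapping-cone argument---that $H_*(C(f);\F_p)=0$ forces $t-1$ to be invertible on $H_*(C(f_\eta);\F_p)$, whence the $(t-1)$-primary part of $H_k(V_\eta;\F_p)$ agrees with that of $H_k((\ts)_\beta;\F_p)\cong\F_p$ with trivial $t$-action---is a clean, self-contained replacement for the Papadima--Suciu citation. It also lets you compute all Betti numbers uniformly via the single Milnor sequence for $V_\eta\to V_{\eta_d}$, rather than treating $H_1$ separately. Your closing remark, that the raw Milnor sequence for $V_\eta\to V$ controls only $\ker(t-1)$ and not $\ker(t-1)^{p^r}$, identifies precisely the gap your mapping-cone step fills; the paper fills the same gap by the Papadima--Suciu citation.

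Both approaches are valid. Yours is more elementary and avoids an external reference, at the cost of leaning harder on the existence of the model map $f$; the paper's is shorter once one accepts the cited structural result.
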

Before embarking on the proof, it is worth considering what happens for the actual $\ts$. Recall the convention that $\gamma$ for a general $V$ corresponds to $\xi \in H^1(\ts)$. Then $(\ts)_\xi = \R \times S^1 \times S^2$ and so  $(\ts)_{\xi,\beta_d} = \R \times S^1 \times S^2$ has finitely generated homology. 
\begin{proof}
The infinite cyclic cover $V_\eta$ corresponding to $\eta$ factors through the cyclic $p^r$-fold cover $V_{\eta_{d}}\to V$
\begin{equation}\label{E:tower}
\xymatrix{
V_\eta \ar[rr]^{\Z-\text{cover}} \ar[dr]_{\Z-\text{cover}}&& V_{\eta_{d}} \ar[ld]^{p^r\text{-fold cover}}\\
&V & 
}
\end{equation}
If $t$ denotes the generator of the covering transformations of $V_\eta \to V$, then $t^{d}$ generates the covering transformations of $V_\eta \to V_{\eta_{d}}$.

The mod $p$ cohomology of $V_\eta$ is a finitely generated module over the PID $\Lambda_p = \F_p[t,t^{-1}]$ and hence splits as a finite sum of free $\Lambda_p$ modules and cyclic modules of the form $ \Lambda_p/\la q(t)\ra$. It is easy to see that the fact that $V$ is an $\F_p$ cohomology $\ts$ implies that the Aomoto complex \eqref{E:coho} is acyclic. According to~\cite[Proposition 9.4]{papadima-suciu:spectral}, this means that each homology group $H_k(V; \Lambda_p)$ is finite dimensional, and moreover any summand of the form 
$
\Lambda_p/((t-1)^j) 
$
has $j =1$.

To compute the homology of $V_{\eta_{d}}$, we examine the Milnor exact sequences with $\F_p$ coefficients for the two infinite cyclic covering spaces in \eqref{E:tower}, taking account of the relationship between their covering transformations. (To simplify the notation, the coefficient groups are not indicated.) The first reads
\begin{equation}\label{E:milnor}
\xymatrix@R-18pt{H_2(V) \ar[r] & H_1(V_\eta) \ar[r]^{t-1} & H_1(V_\eta) \ar[r]^{\pp_*} & H_1(V) \ar[r] &\\
&&  H_0(V_\eta) \ar[r]^{t-1} & H_0(V_\eta). &}
\end{equation}

Since $H_0(V_\eta) \cong \F_p$ and $t$ acts by the identity, the first sequence ends 
\[
\ldots\  H_1(V) = \F_p \oplus \F_p \to \F_p \to 0.
\]
In particular, we must have that $\coker [t-1: H_1(V_\eta) \to  H_1(V_\eta)] \cong \F_p$.

We remark that if the Laurent polynomial $q(t)$ is relatively prime to $t-1$, then $t-1$ is an isomorphism on $\Lambda_p/(q(t))$. We conclude that $H_1(V_\eta)$ must have exactly one summand of the form $\Lambda_p/(t-1) \cong \F_p$ and none of the form  $\Lambda_p/((t-1)^j)$ with $j>1$.

Now we look at the Milnor sequence relating the homology of $V_{\eta_{d}}$ to that of $V_\eta$.
\begin{equation}\label{E:milnorp}
\xymatrix@R-18pt{H_2(V_{\eta_{d}}) \ar[r] & H_1(V_\eta) \ar[r]^{t^{d}-1} & H_1(V_\eta) \ar[r]^{\pp_*} & H_1(V_{\eta_{d}}) \ar[r] &\\
&&  H_0(V_\eta) \ar[r]^{t^{d}-1} & H_0(V_\eta) &
}
\end{equation}
It is standard that (mod $p$, of course!) $t^{d} -1 = (t-1)^{d}$, which is an isomorphism on all but one summand in $H_1(V_\eta)$.  On the other hand, the cokernel of $ (t-1)^{d}$ on $\Lambda_p/(t-1)$ is again $\F_p$, and so the homology group $H_1(V_{\eta_{d}})$ is isomorphic to $\F_p \oplus \F_p$.

Since $V_{\eta_{d}}$ finitely covers $V_\eta$, which has Euler characteristic $0$, the Euler characteristic of $V_{\eta_{d}}$ must be $0$ as well. By Poincar\'e duality, we see that the mod $p$ homology groups of $V_{\eta_{d}}$ are the same as those of $\ts$. So it remains to be seen that the cup product $H^1(V_{\eta_{d}}) \times H^1(V_{\eta_{d}}) \to H^2(V_{\eta_{d}})$ is nonsingular. 

To this end, recall the homology equivalence $f:V \to \ts$ constructed in Lemma \ref{L:map}. It lifts to a degree-one map $f_{\eta_{d}}: V_{\eta_{d}} \to (\ts)_{\beta_{d}}$. Being a degree one map, $f_{\eta_{d}}$ induces a surjection in homology with any coefficients. But with $\F_p$ coefficients, the homology groups of $V_{\eta_{d}}$ and $(\ts)_{\beta_{d}}$ are isomorphic, so this surjection must in fact be an isomorphism. Dually, it is an isomorphism in cohomology, so by naturality, the cup product on $H^1(V_{\eta_{d}}) $ is non-singular.
\end{proof}

With these preliminaries in hand, we can now establish the finite-dimensionality of the homology of $V_{\gamma,\eta_{d}}$, which is a $\Z \times \Z_{d}$ cover of a cohomology $\ts$.
\begin{proof}[Proof of Proposition~\ref{P:prime-power torus}] By construction, $V_{\gamma,\eta_{d}}$ is an infinite cyclic cover of $V_{\eta_{d}}$, which is itself a $\Z_{d}$ cover of a cohomology $\ts$. We showed in Lemma~\ref{L:prime torus} that $V_{\eta_{d}}$ is an $\F_p$ cohomology torus, from which it follows that the cohomology of $V_{\gamma,\eta_{d}}$ with coefficients in $\F_p$ is finite dimensional. But by~\cite[Lemma 6]{casson-gordon:orsay} this implies that the homology of $V_{\gamma,\eta_{d}}$ with complex coefficients is also finite dimensional.  Milnor's Duality  Theorem~\cite{milnor:covering} shows that $V_{\gamma,\eta_{d}}$  satisfies Poincar\'e duality over $\C$.
\end{proof}

\subsection{The $\rho$ invariant of a cohomology $\ts$}\label{S:rho}  From the preceding discussion, we can now define $\rho$ invariants of a cohomology $\ts$ with distinguished generator $\gamma\in H^1(V)$.  We continue with the notation established in Section~\ref{S:notation}, and consider the character $\phi$ with $\phi(h) = \alpha$ and $\phi(g) = 1$. Suppose that $M$ is an oriented $3$-manifold embedded in $V$ that is Poincar\'e dual to $\gamma$.
\begin{proposition}\label{P:prime-power}
Suppose that $\alpha$ has prime-power-order. Then the quantity 
\begin{equation}\label{E:TL}
\rho_\alpha(V)= \rho_\alpha(M)
\end{equation}
is independent of the choice of $M$.
\end{proposition}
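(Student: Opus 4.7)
The plan is to prove invariance by a cobordism argument inside $V$. Given two oriented $3$-manifolds $M, M' \subset V$ both Poincar\'e dual to $\gamma$, first perturb them to be disjoint. Since $[M] = [M']$ in $H_3(V)$, they cobound a compact codimension-zero submanifold $W \subset V$ with $\partial W = M \sqcup -M'$, and the character $\phi$ extends from the boundary to $\pi_1(W)$ via the global character on $\pi_1(V)$. The standard bordism formula for Atiyah-Patodi-Singer $\rho$-invariants then yields
\[
\rho_\phi(M) - \rho_\phi(M') = \sign(W) - \sign_\phi(W),
\]
so the task reduces to showing that this signature defect vanishes.

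For the untwisted piece, I would first observe that $\gamma|_W = 0 \in H^1(W)$: its Lefschetz dual in $H_3(W, \partial W)$ is represented by the boundary component $M$, which is null-homologous there. Hence $W$ lifts to a compact $\widetilde W \subset V_\gamma$. By Lemma~\ref{L:Z-cover}, $V_\gamma$ is a $3$-dimensional Poincar\'e complex over $\Q$, with fundamental class represented by each of $\widetilde M, \widetilde M'$. Since $V_\gamma$ is tiled by the $\Z$-translates of the lifts of $W$ and its complement in $V$, a Mayer-Vietoris argument should force $\widetilde W$ to be a rational homology cylinder on $M$, whence $\sign(W) = 0$.

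For the twisted piece I would use the prime-power hypothesis: $\phi$ factors through a character $\phi_d: \pi_1(V) \to \Z_d$ of order $d = p^r$, and on $V$ this class coincides with $\eta_d$. The corresponding $d$-fold cyclic cover $\widehat W \to W$ embeds in $V_{\gamma, \eta_d}$ (again using $\gamma|_W = 0$). By Proposition~\ref{P:prime-power torus}, $V_{\gamma, \eta_d}$ is a $3$-dimensional Poincar\'e complex over $\C$, and the parallel argument shows $\widehat W$ is a $\C$-homology cylinder. Decomposing $H_2(\widehat W; \C) = \bigoplus_{k=0}^{d-1} H_2(W; \C_{\phi^k})$ into $\Z_d$-eigenspaces, which are pairwise orthogonal under the twisted intersection form, the homology-cylinder conclusion transfers to each summand, giving $\sign_\phi(W) = 0$.

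The delicate step is transferring the Poincar\'e duality results of Lemma~\ref{L:Z-cover} and Proposition~\ref{P:prime-power torus} to the homology-cylinder property of the embedded slabs $\widetilde W$ and $\widehat W$. This requires a careful Mayer-Vietoris computation exploiting the covering-space tiling structure together with the fact that each boundary component of a slab carries the fundamental class of the ambient cover.
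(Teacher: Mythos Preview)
Your overall strategy matches the paper's: construct a cobordism between the two $3$-manifolds over which $\phi$ extends, then show both the ordinary and twisted signature of that cobordism vanish using the Poincar\'e duality properties of $V_\gamma$ and $V_{\gamma,\eta_d}$ established earlier. There are, however, two genuine gaps in your execution that the paper's argument avoids.

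First, the step ``perturb $M$ and $M'$ to be disjoint'' in $V$ does not work: two codimension-one submanifolds of a $4$-manifold generically intersect in a surface, and there is no reason one can isotope $M'$ off $M$ inside $V$. The paper sidesteps this entirely by lifting $M$ and $M'$ to $V_\gamma$ at the outset and translating one lift by a high power of the deck transformation; disjoint lifts in $V_\gamma$ then automatically cobound a compact $W\subset V_\gamma$. Your subsequent observation that $\gamma|_W=0$ and that $W$ lifts to $V_\gamma$ would, if the first step worked, simply reproduce this same $W$; so the detour through $V$ buys nothing and costs you a false step.

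Second, the assertion that $\widetilde W$ (or $\widehat W$) is a rational homology cylinder is stronger than what is needed and is not obviously true; the Mayer--Vietoris sketch you describe does not establish it, and a slab can certainly contain $2$-cycles that are nontrivial in the slab but bound in the ambient cover. The paper uses a much lighter argument: since $M'\hookrightarrow V_\gamma$ is a degree-one map (Lemma~\ref{L:Z-cover}), it surjects on rational $H_2$, so every $2$-class in $W$ is homologous \emph{in $V_\gamma$} to one supported on the $3$-manifold $M'$ (and likewise on $N'$). Intersection numbers of $2$-cycles are invariant under homologies in the ambient open $4$-manifold, so all intersection numbers in $W$ vanish and $\sign(W)=0$. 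The twisted case is identical, replacing $V_\gamma$ by $V_{\gamma,\eta_d}$ and invoking Proposition~\ref{P:prime-power torus}. No homology-cylinder claim or eigenspace decomposition is needed.
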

\begin{proof}
Let $M$ and $N$ be two such choices, and choose lifts of $M$ and $N$, say $M'$ and $N'$ to the infinite cyclic cover $V_\gamma \to V$ of $V$ corresponding to $\gamma$. Taking the image $N'$ under a high positive power of the covering transformation, we may assume that $M'$ and $N'$ are disjoint, and that $N'$ is to the right of $M'$. Composing $\phi$ with the maps induced by the projection $\pi$, we get that $\rho_{\phi}(M) = \rho_{\phi\circ \pi_*}(M')$, and similarly for $N$. 
%For notational simplicity we will just work in terms of $M$ and $N$.  

Now $M'$ and $N'$ cobound a compact submanifold $W$ of $V_\gamma$, and $\phi\circ \pi_*$ extends to $\pi_1(W)$. So by definition (cf. \eqref{E:rho}) we know
\[
\rho_{\phi\circ \pi_*}(N') - \rho_{\phi\circ \pi_*}(M') = \sign(W) - \sign_{\phi\circ \pi_*}(W).
\]
We make a final appeal to Milnor's duality theorem. $M'$ carries the fundamental class of $V_\gamma$, thought of as a space satisfying $3$-dimensional Poincar\'e duality.  In other words, the inclusion map of $M'$ into $V_\gamma$ is a degree one map, and hence induces a surjection in rational homology. It follows that the intersection form on $H_2(V_\gamma;  \Q)$ vanishes. The same is true for $W$, and so $\sign(W) = 0$. Similarly, Proposition~\ref{P:prime-power torus} implies that the $p^r$ fold cover of $V_\gamma$ coming from $\phi$ has vanishing intersection form. It follows that the twisted signature $\sign_{\phi\circ \pi_*}(W)$ vanishes as well, so that 
\[
\rho_{\phi}(N) = \rho_{\phi\circ \pi_*}(N') = \rho_{\phi\circ \pi_*}(M') = \rho_{\phi}(M). 
\]
\end{proof}

In applications to knot concordance~\cite{litherland:torus} it is sometimes convenient to replace the \TL\  invariant by its average value around points on the circle where it jumps. We record a similar definition for the torus version of the invariant.
Choose a submanifold $M$ as above, and note that $\rho_{\phi}(M) $ is defined for arbitrary $\alpha \in S^1$. As a function on the circle, it is piecewise constant, with finitely many jumps at those characters $\phi_{\alpha_0}$ for which 
\[
\dim(H^1(M; \C_{\phi_{\alpha_0}}) > \lim_{\alpha \to \alpha_0^{-}} \dim(H^1(M; \C_{\phi_{\alpha}})\ \text{or}\ \lim_{\alpha \to \alpha_0^{+}} \dim(H^1(M; \C_{\phi_{\alpha}}).
\]
Informally, this means that the twisted cohomology jumps at $\alpha_0$; there are finitely many $\alpha_0 \in S^1$ where this happens. Let us normalize by defining $\bar\rho_{\phi}(M)$ to be the average value of the one-sided limits as $\beta \to \alpha^\pm$. In other words
\[
\bar\rho_{\phi}(M) = \frac12\left( \lim_{\beta \to \alpha^{-}} \rho_{\phi_\beta}(M) + \lim_{\beta \to \alpha^{+}} \rho_{\phi_\beta}(M)\right).
\]
It is again a piecewise constant function on $S^1$ with finitely many jumps.  

Since $\bar\rho$ is an average of integers, it is possible that it takes values in $\frac12\Z$, rather than in $\Z$. The following lemma shows that it is in fact an integer. 
\begin{lemma}\label{L:mod2} Let $Y$ be an oriented $3$-manifold and $\eta\in H^1(Y)$, and let $\phi_\alpha$ be the $\U(1)$ representation of $\pi_1(Y)$ given by the composition $\eta: H_1(Y) \to \Z \to \U(1)$ where $1 \in \Z$ is sent to $\alpha \in \U(1)$.  The  parity of the invariant $\rho_{\phi_\alpha}$ is constant off of a finite set. In particular, $\bar\rho_\alpha(Y)$ is an integer for all values of $\alpha$.
\end{lemma}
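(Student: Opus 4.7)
Plan. I would aim to show that $\rho_{\phi_\alpha}(Y)$, regarded as an integer-valued piecewise constant function of $\alpha\in S^1$, only jumps by even integers across its finitely many discontinuities. Once that is established, the parity is constant off the finite jump set, both one-sided limits in the definition of $\bar\rho_\alpha(Y)$ share a parity, their sum is even, and the average is an integer.

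Because the oriented bordism group $\Omega_3^{\mathrm{SO}}(S^1)$ vanishes, one can fix a single compact oriented $4$-manifold $W$ with $\partial W=Y$ over which $\eta$—and therefore every character $\phi_\alpha$ simultaneously, since the family $\{\phi_\alpha\}$ differs only in the target $\U(1)$—extends. Then \eqref{E:rho} gives
$$
\rho_{\phi_\alpha}(Y)=\sign(W)-\sign_{\phi_\alpha}(W),
$$
and since $\sign(W)$ is constant in $\alpha$, the problem reduces to showing that the twisted signature $\sign_{\phi_\alpha}(W)$ has only even jumps as $\alpha$ varies over $S^1$.

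The cleanest route is via the Atiyah-Patodi-Singer formula. Twisting the signature operator by a $\U(1)$ local system does not alter the local signature integrand, so one has
$$
\rho_{\phi_\alpha}(Y)=\eta_{\phi_\alpha}(Y)-\eta(Y),
$$
and the eta invariant of the one-parameter family of twisted odd-signature operators on $Y$ varies smoothly off the finite set of $\alpha_0\in S^1$ where the operator has a nontrivial kernel; across each such $\alpha_0$ it jumps by $2\cdot\mathrm{sf}$, twice the (integer) spectral flow. Consequently each jump of $\rho_{\phi_\alpha}(Y)$ is even, the parity is constant off the finite jump set, and $\bar\rho_\alpha(Y)\in\Z$.

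The main obstacle is the reliance on analytic input. A purely topological alternative, more in the spirit of the rest of the paper, is to work directly with the Hermitian intersection form $Q_\alpha$ on $H_2(W;\C_{\phi_\alpha})$: near a singular $\alpha_0$ one splits the chain-level model for $Q_\alpha$ so that the jumping classes are concentrated on $\ker Q_{\alpha_0}$, and a first-order Hermitian perturbation calculation identifies the jump in $\sign(Q_\alpha)$ with twice the signature of a Hermitian form on $\ker Q_{\alpha_0}$, hence an even integer. Either approach yields the even-jump statement and hence the lemma.
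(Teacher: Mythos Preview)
Your overall reduction---choose $W$ bounding $Y$ over which $\eta$ extends and study the parity of $\sign_{\phi_\alpha}(W)$---is exactly how the paper begins, and your two proposed endgames are both valid. But the paper's argument is considerably more elementary than either of yours. After surgery to arrange $\pi_1(W)\cong\Z$, the paper considers the $\Lambda$-valued equivariant intersection form on $H_2(W;\Lambda)$, passes to the field $\R(t)$, and diagonalises the resulting Hermitian form there: $A\,B(t)\,A^{*}=\operatorname{diag}(\lambda_1(t),\ldots,\lambda_k(t))$. For $\alpha$ avoiding the finitely many poles and zeros of the $\lambda_j$ and the entries of $A$, one has $\sign_{\phi_\alpha}(W)=\#\{j:\lambda_j(\alpha)>0\}-\#\{j:\lambda_j(\alpha)<0\}$, which is congruent to $k$ mod $2$ regardless of the signs. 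So the parity is manifestly constant off a finite set, with no analysis and no local perturbation calculation required.

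Your route (a) via the APS formula and spectral flow is correct but imports analytic machinery the paper otherwise avoids; you would also need to be careful with the factor of $2$ and the kernel correction in the identity relating $\rho_\phi$ to $\eta_\phi-\eta$. Your route (b) is closer in spirit to the paper, and your ``jump $=2\cdot(\text{signature on }\ker Q_{\alpha_0})$'' claim is right when the form lives on a fixed space and the first-order term is nondegenerate on the kernel---but you would have to first arrange (e.g.\ via the surgery step and the passage to $H_2(W;\Lambda)\otimes_\Lambda\C$) that the underlying vector space does not itself jump, and handle higher-order degeneracies. The diagonalisation-over-$\R(t)$ trick sidesteps all of that in one line.
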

Chuck Livingston pointed out the idea used in the proof of diagonalizing over the ring of rational functions. 
\begin{proof}
Since the cobordism group $\Omega_3(\Z)$ vanishes, we can choose a $4$-manifold $W$ over which $\eta$ extends. After doing surgery on loops in $\ker[\pi_1(W) \to \Z]$ we may assume that in fact $\pi_1(W) \cong \Z$. It follows that all of the representations $\phi_\alpha$ extend to $W$, and so it suffices to show that the parity of the signatures $\sign_{\phi_\alpha}(W)$ that appear in the definition of $\rho_{\phi_\alpha}$ is constant on a dense set of $\alpha$. 

The local coefficient homology group $H_2(W;\Lambda)$ corresponding to the homomorphism $\eta: H_1(W) \to \Z$ supports the equivariant intersection form $q_\Lambda$. It is Hermitian with respect to the involution on $\Lambda$ given by $\bar{t} = t^{-1}$; see for instance~\cite{wall:book2,hambleton:intersection}.  For $\alpha \in \U(1)$, the form $q_\Lambda$ determines the intersection form on $H_2(W;\C_\alpha)$ as follows. An application of the K\"unneth spectral sequence as in~\cite{friedl-hambleton-melvin-teichner} shows that
$$
H_2(W;\C_\alpha) \cong H_2(W;\Lambda) \otimes_\Lambda \C
$$
where $t$ acts on $\C$ by multiplication by $\alpha$. This isomorphism respects intersections. 

To compute signatures, at least for generic values of $\alpha$ (to be specified momentarily) we pass to the quotient field $\Q(t)$ of $\Lambda$, and then tensor with $\R$ to get a form $q_{\R(t)}$. The form $q_{\R(t)}$ can then be diagonalized~\cite{gilmer-livingston:jumps,kearney:stable} in the following sense. With respect to a basis, represent $q_{\R(t)}$ by a matrix $B(t)$; then there is an invertible matrix $A(t)$ with $A B(t) A^* = D(t)$ a diagonal matrix.  Let the non-zero entries of $D(t)$ be rational functions $\lambda_1(t),\ldots,\lambda_k(t)$.

As long as $\alpha$ is not a pole of any $\lambda_j(t)$ or of any entry in $A(t)$, then $D(\alpha)$ is a representative for $q_\Lambda \otimes C_\alpha$. Hence (with finitely many exceptions) $\sign_\alpha(W)$ is the number of positive values of $\lambda_j(\alpha)$ minus the number of negative ones. With the exception of the (finitely many) zeros of the $\lambda_j(t)$, this is given by $k \pmod2$. The lemma follows.
\end{proof}
The arguments in~\cite{gilmer-livingston:jumps,kearney:stable} give more information about the location of the jumps in $\rho_{\phi_\alpha}$ and their relation to the Alexander polynomial in the special case when $Y$ is $0$-surgery on a knot. 

The extension of the $\rho$ invariant of $V$ to arbitrary characters is now a formal consequence of what has come before. 
\begin{definition}\label{D:TL}
Let $T$ be a torus embedded in $X$, a homology $\sss$. Let $V$ be the associated cohomology $\ts$ as before, and let $M$ be an oriented $3$-manifold dual to $\gamma$. Then for any $\alpha \in \U(1)$, define 
\begin{equation}\label{E:Vrho}
\rho_\alpha(V)  = \bar\rho_{\phi}(M).
\end{equation}
\end{definition}
The final result of this section is then
\begin{thm}\label{T:Vrho} Let $V$ be a cohomology $\ts$, and let $\{g,h\}$ be a basis for $H_1(V)$. Then for any $\alpha\in S^1$, the quantity $\rho_\alpha(V)$  in \eqref{E:Vrho} is well-defined.
\end{thm}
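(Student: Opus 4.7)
The plan is to show that $\bar\rho_\phi(M)$ depends only on $V$, $\gamma$ and $\alpha$, and not on the choice of Poincar\'e dual $3$-manifold $M$; integrality will then follow from Lemma~\ref{L:mod2}. The character $\phi$ is already canonical given $\gamma$ and $\alpha$, since the Poincar\'e dual element $h \in H_1(V)$ is characterized (up to sign) as a generator of $\ker\gamma$, and only the value $\phi(h)=\alpha$ enters after restriction to $\pi_1(M)$.

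First I would fix two choices $M$ and $N$ of oriented $3$-manifolds in $V$ that are Poincar\'e dual to $\gamma$, and consider the two piecewise constant functions $f_M, f_N : S^1 \to \Z$ defined by $f_M(\alpha) = \rho_{\phi_\alpha}(M)$ and $f_N(\alpha) = \rho_{\phi_\alpha}(N)$. The discussion preceding Lemma~\ref{L:mod2} guarantees that each has only finitely many jump points, so their union $S \subset S^1$ is finite and both functions are locally constant on $S^1 \setminus S$.

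Next I would exploit density. For any fixed prime $p$, the $p^r$-th roots of unity, as $r$ varies, are dense in $S^1$; in particular the set of $\alpha$ of prime-power order meets every connected component of $S^1 \setminus S$. By Proposition~\ref{P:prime-power}, $f_M(\alpha) = f_N(\alpha)$ at every prime-power-order $\alpha$, so the two locally constant functions must agree on each component of $S^1 \setminus S$, and hence on a cofinite subset of $S^1$. For any $\alpha_0 \in S^1$ the one-sided limits $\lim_{\beta \to \alpha_0^\pm} f_M(\beta)$ and $\lim_{\beta \to \alpha_0^\pm} f_N(\beta)$ are computed through points eventually lying outside $S$, so they coincide; averaging then yields $\bar\rho_\phi(M) = \bar\rho_\phi(N)$, which is the required independence of $M$.

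Finally, to see that $\rho_\alpha(V) \in \Z$ rather than $\tfrac12\Z$, I would apply Lemma~\ref{L:mod2} to the $3$-manifold $M$ with $\eta \in H^1(M)$ the restriction of a suitable class: the parity of $f_M$ is constant off a finite set, so the one-sided limits at every $\alpha_0$ have the same parity and their average is an integer. The main conceptual content of the argument sits inside Proposition~\ref{P:prime-power} together with the piecewise-constancy of $\rho_{\phi_\alpha}(M)$; the only real obstacle at this stage is reconciling the a priori different jump sets of $f_M$ and $f_N$, and this is precisely what the density of prime-power roots of unity handles.
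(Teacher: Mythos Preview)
Your argument is correct and follows essentially the same route as the paper: both proofs compare two piecewise-constant functions $\rho_{\phi_\alpha}(M)$ and $\rho_{\phi_\alpha}(N)$, invoke Proposition~\ref{P:prime-power} to get agreement at the dense set of prime-power roots of unity, and conclude that the averaged functions $\bar\rho$ coincide everywhere. You spell out the density and component-by-component argument more explicitly than the paper does, and you include the integrality step via Lemma~\ref{L:mod2} (which the paper defers to the proof of Theorem~\ref{T:TL}), but there is no substantive difference in strategy.
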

\begin{proof}
To see that $\rho_\alpha(V,\gamma)$ is well-defined, suppose that $M$ and $N$ are submanifolds of $V$ dual to $\gamma$. Then $\rho_{\phi}(M)$ and $\rho_{\phi}(N)$ are both well-defined piecewise continuous functions on $S^1$ with finitely many discontinuities. On the other hand, Proposition~\ref{P:prime-power} says that they agree on the dense subset of points of prime-power order. Hence the normalized versions $\bar\rho_{\phi}(M)$ and $\bar\rho_{\phi}(N)$ must agree at all points.
\end{proof}
%\begin{remark}\label{R:parity}
%Because of the average in the definition, it is not clear whether $\rho_\alpha(V)$ is necessarily an integer. This is why, in the statement of Theorem~\ref{T:TL} the invariant $\sigma_\alpha(X,T)$ is stated to be in $\frac12\Z$ rather than $\Z$.    In every example we have calculated, it is the case that for $\alpha$ of prime-power order, $\rho_\alpha(V)$ is an even integer.  Were this the case in general (which we believe to be true) then $\rho_\alpha(V)$ would always be an integer.\end{remark}
It is worth remarking that $\rho_\alpha(V,\gamma)$, while independent of the choice of $3$-manifold $M$, may well depend on the choice of $\gamma$. 
\begin{example}\label{e:gamma}
Consider $V = S^1 \times S^3_0(K)$. Then for $\gamma$ pulled back from the circle, we will show below that  $\rho_\alpha(V,\gamma)$ is just the \TL\  invariant of $K$ and can well be non-zero.  On the other hand, if $\gamma$ is the pullback of the generator of $H^1(S^3_0(K))$, then the dual to $\gamma$ is represented by $S^1 \times F$ where $F$ is a Seifert surface for $K$, capped off in the surgered manifold.  But $S^1 \times F = \partial (S^1 \times H)$ where $H$ is a handlebody. Since  $H_2(S^1 \times F) \to H_2(S^1 \times H)$ is surjective, and the same is true for the twisted homology, both the ordinary and twisted signatures vanish. Hence $\rho_\alpha(V,\gamma) = 0$ for this choice of $\gamma$.
\end{example}

\section{The \TL\  invariant of a torus}\label{S:TL}
As described in the introduction, the idea is to construct, from a torus $T\subset X$ where $X$ is an oriented and homology oriented $\sss$, a cohomology $\ts$, say $V$. Then the \TL\  invariant of $(X,T)$ will be defined as the $\rho$ invariant of $V$ from the preceding section.  A small complication is that we need to see that the homology orientation gives rise to a particular class $\gamma \in H^1(V)$. As observed in Example~\ref{e:gamma} the $\rho$ invariant of $V$ may well depend on this choice.

In brief, we choose a 
%trivialization $\psi$ of the normal bundle $\nu(T,X)$, and a 
diffeomorphism $f: T^2 \times \partial D^2 \to \partial\nu$. Then the manifold $V$ is defined to be  
\begin{equation}\label{E:Vglue}
V_{f} = T^2 \times \partial D^2 \cup_f X - \Int(\nu).
\end{equation}
We refer to this operation as a torus surgery. A useful model to keep in mind is $(X,T) = S^1 \times (Y,K)$ with $Y$ an integral homology sphere, and to have the gluing represent $S^1$ times an ordinary $0$-framed surgery on $K$.  
This model is a bit oversimplified, as there are many gluing maps that will produce a homology $\ts$.  Because many self-diffeomorphisms of a $3$-torus extend over $T^2 \times D^2$, many of these manifolds are diffeomorphic. They are presumably not all diffeomorphic, but it will turn out that they all give rise to the same $\rho$ invariants.

\subsection{Torus surgery}\label{S:gluing}
Surgery along a torus is a standard operation~\cite{fs:knots,baykur-sunukjian:round} in $4$-manifold topology, and is thoroughly discussed in in~\cite{plotnick:fibered}.  We start with a copy of $T^2 \times D^2$, whose boundary is  parameterized as a product of circles $a \times b \times m$ where $m$ is the boundary of $D^2$.  One bit of notation for this section: we will use the same letter for a circle and the homology class that it carries.

The torus $T \subset X$ may be parameterized as $g \times l$ where $g$ represents the preferred generator of $H_1(X)$, and $l$ is null-homologous in $X$. Note that $l$ is determined by this requirement, but $g$ is only well-defined up to replacing $g$ by $g+nl$ for an arbitrary integer $n$. Pick a framing of $\nu(T)$, which is specified by a section $\psi$ of the normal circle bundle. In particular, $\partial \nu(T)$ is now parameterized as $\psi(g) \times \psi(l) \times \mu_T$ where $T$ is the (oriented) meridian circle. Note that there is an action of $H^1(T)$ on the set of choices for $\psi$.  We can cut down this indeterminacy by choosing $\psi(l)$ to be null-homologous in the complement of $T$ (or in other words to have linking number $0$ with $T$). There does not seem to be a canonical choice for $\psi(g)$.

It is not hard to check, via the Mayer-Vietoris sequence, that $V$ is a cohomology $\ts$ if and only if $f(m) = \psi(l)$, and we choose one such gluing map. We refer to any such choice as a $0$-surgery on $X$ along $T$.  The next item to establish is that the cohomology class $\gamma\in H_1(X)$ naturally leads to a cohomology generator, also called $\gamma$, in $H^1(V)$.  This can also be done by analyzing the Mayer-Vietoris sequence, but we prefer the following more geometric argument.

Choose a connected oriented submanifold $Y^3$ of $X$ that is Poincar\'e dual to $\gamma$ and transverse to $T$.  It is helpful  to clean up the intersection.
\begin{lemma}\label{L:circle}
One can choose $Y$ so that $Y \cap T = K$, a knot with homology class $l$. 
\end{lemma}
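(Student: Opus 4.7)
The plan is to use the Pontryagin--Thom correspondence between oriented $3$-submanifolds Poincar\'e dual to $\gamma$ and homotopy classes of smooth maps $X \to S^1$. Pick a smooth $f : X \to S^1$ with $[f] = \gamma$. Its restriction $f|_T$ represents $\gamma|_T \in H^1(T)$; since $\gamma$ pairs to $1$ with $g$ (by definition of $g$) and to $0$ with $l$ (because $l$ is null-homologous in $X$), in terms of the product structure $T \cong S^1_g \times S^1_l$ the class $\gamma|_T$ is the pullback of the generator of $H^1(S^1)$ under the projection $\mathrm{pr}_g : T \to S^1_g$. Consequently $f|_T$ is homotopic to $\mathrm{pr}_g$.

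Since the inclusion $T \hookrightarrow X$ is a cofibration, the homotopy extension property lets us assume $f|_T = \mathrm{pr}_g$ on the nose, after replacing $f$ by a homotopic map. Choose a regular value $p \in S^1$ of $f$ (any value is automatically regular for $\mathrm{pr}_g$, a submersion), and set $Y_0 = f^{-1}(p)$. Then $Y_0$ is an oriented codimension-one submanifold Poincar\'e dual to $\gamma$, transverse to $T$, and
\[
Y_0 \cap T = (f|_T)^{-1}(p) = \mathrm{pr}_g^{-1}(p) = \{p\} \times S^1_l,
\]
a single embedded circle representing the class $l \in H_1(T)$.

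Finally, to promote $Y_0$ to a connected submanifold without disturbing this intersection, note that $X \setminus T$ is connected (since $T$ has codimension two in the connected $4$-manifold $X$), so any two components of $Y_0$ can be joined by an embedded arc in $X \setminus T$. Performing ambient $0$-surgery on $Y_0$ along such an arc produces a submanifold with one fewer component that is homologous to $Y_0$ and whose intersection with $T$ is unchanged. Iterating yields the desired connected $Y$ with $Y \cap T = K$ a knot of class $l$. The only point requiring any care is the identification of $\gamma|_T$ with $\mathrm{pr}_g^\ast$ of the $S^1$-generator; once that is in hand, the result follows from standard transversality and tubing arguments.
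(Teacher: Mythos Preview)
Your proof is correct and takes a genuinely different route from the paper's. The paper starts with an arbitrary $Y$ dual to $\gamma$ and transverse to $T$, observes that $Y \cap T$ is a disjoint union of simple closed curves on $T$ whose total class is $l$ (hence some null-homotopic circles together with $2n+1$ parallel copies of $l$, $n+1$ positively and $n$ negatively oriented), and then simplifies this intersection by innermost-disk moves and ``annulus surgeries'' on adjacent oppositely oriented pairs. You instead build $Y$ with the correct intersection from the start via Pontryagin--Thom, using the homotopy extension property to arrange $f|_T = \mathrm{pr}_g$ before taking a regular preimage; only a final tubing is needed for connectedness. Your argument is cleaner in that it avoids the (standard, but only sketched) surgery steps, while the paper's approach has the mild advantage of showing that any given $Y$ can be modified to have the desired property. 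One small point worth making explicit: after applying the homotopy extension property you should smooth $f$ relative to $T$ before choosing a regular value, and note that transversality of $Y_0 = f^{-1}(p)$ to $T$ at each intersection point follows because $d(f|_T) = d(\mathrm{pr}_g)$ is nowhere zero.
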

\begin{proof}
The intersection $Y \cap T$ is a union of oriented circles in $T$, and Poincar\'e duality says that the sum of their homology classes is $l$. Hence $Y \cap T$ consists of a collection of circles that bound disks, together with a collection of $2n+1$ curves isotopic to $l$, with $n+1$ having the same orientation as  $l$ and $n$ oriented the opposite way. By a standard innermost disk/annulus argument, one can do surgery on $Y$ along the disks and an `annulus surgery' along pairs of oppositely oriented copies of $l$ to remove all but one curve.
\end{proof}
A related argument is given in~\cite[\S 2.4]{ma:surgery}.

Note that the normal bundle of $K$ in $Y$ is the restriction of the normal bundle of $T$ in $X$ to $K$. In particular, the (canonical) framing of $l$ induces a framing of $K$. Let $M$ be the result of surgery on $K \subset Y$ with that framing. Because of our choice of framing, the surgery on $Y$ can be done simultaneously (and inside of) the torus surgery on $X$. 

It follows that no matter what gluing $f$ we choose,  $M$ becomes a submanifold of $V$.  Now we can define the cohomology class $\gamma$ to be the Poincar\'e dual of $M$.  Because $M$ is connected and non-separating, it follows that $\gamma$ is a primitive class, and hence generates a summand of $H^1(V)$. Finally, we have our definition.
\begin{definition}\label{D:torusTL}
Let $X$ be an oriented homology $\sss$, and let $\gamma\in H^1(X)$ be a homology orientation. Let $V$ be any cohomology $\ts$ resulting from $0$-surgery along $T$, and let $\gamma \in H^1(V)$ be the cohomology generator described above. Then for $\alpha \in S^1$, define
\[
\sigma_\alpha(X,T,\gamma) = \rho_\alpha(V,\gamma).
\]
where $\rho_\alpha(V,\gamma)$ is defined in \eqref{E:Vrho}.
\end{definition}
In using this definition, it is important to recall that for $\alpha$ of prime-power order, $\rho_\alpha(V,\gamma)$ refers to the $\rho$ invariant of the $3$-manifold $M$, whereas for general $\alpha$, it refers to the average of the one-sided limits.

With these preliminary results in hand, we can establish the main theorem.
\begin{proof}[Proof of Theorem~\ref{T:TL}]
The discussion above shows that $\sigma_\alpha(X,T,\gamma) $ is well-defined, and we have already verified in Lemma~\ref{L:mod2} that it is an integer. So we need to verify the two properties stated in the theorem. 

The first property (the computation for a product $S^1 \times (Y,K)$ with $Y$ a homology sphere) is a consequence of an interpretation of the standard \TL\  invariant of $(Y,K)$ as a $\rho$ invariant. This seems to be standard for knots in the $3$-sphere--for $\alpha$ of finite order this is~\cite[Lemma 3.1]{casson-gordon:stanford} while for arbitrary $\alpha$ see for instance~\cite{friedl:eta,levine:eta,litherland:satellite,powell:4-genus}. The idea is that one can use the complement of a Seifert surface for $K$, pushed into $B^4$, to compute the $\rho$ invariant of $Y_0(X)$. The key computation relates the twisted signatures of this complement to the Seifert form of $K$. For an arbitrary homology sphere $Y$, much the same proof works, with $B^4$ replaced by an arbitrary compact $4$-manifold with boundary $Y$. We record the result here.
\begin{proposition}\label{P:eta-TL} For any $\alpha \in S^1$, we have $\sigma_\alpha(Y,K) = \rho_\alpha(Y_0(K))$.
\end{proposition}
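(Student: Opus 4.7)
The plan is to reduce both sides to a single Hermitian signature computed from a Seifert surface pushed into a bounding $4$-manifold, following the classical arguments of Viro, Kauffman-Taylor, and Casson-Gordon, as already signposted in the paragraph preceding the proposition.

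First, I would choose a Seifert surface $F$ for $K$ in $Y$ and a compact oriented $4$-manifold $W$ with $\partial W = Y$ (possible since $\Omega_3^{SO}=0$), then push the interior of $F$ into $W$ to obtain a properly embedded surface $F^{\ast} \subset W$ with $\partial F^{\ast} = K$. Attaching a $2$-handle $h^2$ along $K$ with $0$-framing (the Seifert framing induced by $F$) yields $W' = W \cup h^2$ with $\partial W' = Y_0(K)$, and $F^{\ast}$ closes up with the core of $h^2$ into a closed oriented surface $\hat F \subset W'$. A Mayer-Vietoris computation on the complement $N = W' \setminus \nu(\hat F)$ shows $H_1(N;\Z) \cong \Z$, generated by the meridian of $\hat F$, which restricts on $\partial N \cap Y_0(K)$ to the meridian $\mu$ of $K$. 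Hence the character $\phi_\alpha: \pi_1(Y_0(K)) \to \U(1)$ sending $\mu \mapsto \alpha$ extends across $N$.

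Next, I would invoke the cobordism formula for $\rho$: regarding $N$ as a cobordism from $Y_0(K)$ to the circle bundle $S(\nu \hat F)$, the $\rho$-contribution from the latter vanishes (its twisted signature is zero since $\phi_\alpha$ restricts trivially to $\hat F$), yielding $\rho_\alpha(Y_0(K)) = \sign(N) - \sign_\alpha(N)$, possibly up to a correction equal to $\sign(W)$ that is absorbed either by choosing $W$ with $\sign(W)=0$ or by doubling along $W$. The classical computation of~\cite{viro:links,kauffman-taylor:links} for $Y=S^3$, and its extensions in~\cite{friedl:eta,levine:eta,litherland:satellite,powell:4-genus} for knots in arbitrary homology spheres, identifies the twisted intersection form on $H_2(N; \C_{\phi_\alpha})$ with the Hermitian Levine-Tristram form $(1-\alpha)V + (1-\bar\alpha)V^{T}$ associated to the Seifert matrix $V$ of $K$, whose signature is by definition $\sigma_\alpha(K)$; the ordinary intersection form on $H_2(N;\Z)$ has signature zero for this choice of $W$.

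For $\alpha$ of prime-power order, the argument runs cleanly through the associated branched cyclic cover and the Casson-Gordon formula~\cite[Lemma 3.1]{casson-gordon:stanford}, giving the equality directly. The extension to arbitrary $\alpha \in S^1$ then follows by combining the piecewise constancy of $\rho_{\phi_\alpha}$ and $\sigma_\alpha(K)$ in $\alpha$ with the averaging convention of Definition~\ref{D:TL} and Lemma~\ref{L:mod2}, since both sides are uniquely determined by their values on the dense set of prime-power-order characters. The main obstacle is the careful accounting of the boundary correction from $S(\nu \hat F)$ and the signature of $W$, since $\phi_\alpha$ does not extend over $W'$ itself; this is precisely the issue that has been addressed in various ways in the cited references, so here I would simply invoke their computations rather than reproduce them.
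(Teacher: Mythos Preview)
Your proposal is correct and follows essentially the same approach that the paper sketches: push a Seifert surface into a compact $4$-manifold $W$ bounding $Y$ (replacing $B^4$ in the classical case), take the complement of the capped-off surface in $W\cup h^2$, and identify the twisted signature of that complement with the Levine--Tristram form via the cited references.

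One small remark: your final paragraph, reducing to prime-power $\alpha$ and then invoking density plus the averaging convention of Definition~\ref{D:TL}, is unnecessary and slightly muddled. The Seifert-form identification you state in your third paragraph already holds for \emph{all} $\alpha\in S^1$ (this is exactly what the references \cite{friedl:eta,levine:eta,litherland:satellite,powell:4-genus} supply), so the proposition follows directly without any density argument. Moreover, the statement of the proposition concerns the genuine $\rho_\alpha(Y_0(K))$, not the averaged $\bar\rho$; two piecewise-constant functions agreeing on a dense set need not agree at their jump points, so the density route would only recover the averaged identity, not the proposition as stated. Simply drop that paragraph and your argument is clean.
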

Now consider a knot $K$ in a homology sphere $Y$, so that the product $(S^1 \times Y, S^1 \times K)$ is an embedded essential torus.  The cohomology class $\gamma$ pulled back from the standard generator of $H^1(S^1)$ provides a homology orientation.  Then we need to show that
\[
\sigma_\alpha(S^1 \times Y, S^1 \times K) = \sigma_\alpha(Y,K)
\]
Referring back to notation from Section~\ref{S:gluing}, the curve $l \subset T$ is just a copy of $K$, and its framing (restricted to $T \cap Y$) is exactly the usual $0$-framing of $K$ in $Y$. Hence the manifold $M$ in that section is just $Y_0(K)$, and the desired equation is just the definition of $\sigma_\alpha(S^1 \times Y, S^1 \times K)$ coupled with Proposition~\ref{P:eta-TL}.

The second property is straightforward from the definitions. Changing either orientation will reverse the orientation of the manifold $M$, and this will change the sign of $\rho_\alpha(M)$ for all prime-power $\alpha$. It follows that the averaged invariants $\bar\rho_\alpha(M)$ all change signs as well, which implies that  $\sigma_\alpha(X,T)$ changes sign.
\end{proof}
%:
The fact that for $M$ given by $0$ surgery on a knot, the $\rho$ invariant used in our definition of $\sigma_\alpha(X,T)$ can be computed via the Seifert form of the knot has a generalization due to Neumann~\cite{neumann:signature}.  The key observation is that for any $M$ and $\alpha$, the representation $\phi_\alpha)$ factors through a homomorphism $H_1(M) \to \Z$.  Equivalently, $\phi_\alpha$ comes from a cohomology class $\eta \in H^1(M;\Z)$. The main result of~\cite{neumann:signature} then interprets $\rho_{\phi_\alpha}(M)$ in terms of an isometric structure (an algebraic construction using the cohomology of a surface dual to $\eta$.)  In the case of $0$ surgery, such an isometric structure is equivalent~\cite{kervaire:cobordism,levine:invariants} (up to a cobordism relation, respecting signatures) to the Seifert matrix that defines the \TL\ invariant.

\section{Examples and comparison with $\lfo(X,T,\alpha)$}\label{S:ex}
We compute the \TL\  invariant for three constructions of embedded tori.  The first two come from the mapping torus of the $n$-fold branched cover of a knot. In this mapping torus, we can consider the torus swept out by the fixed point set of the covering transformation, or alternatively the torus swept out by an invariant knot disjoint from the fixed point set. The third comes from taking a non-trivial  circle bundle over a $3$-manifold. 
Along with the product case, the first and last of these constructions were treated in Echeverria's paper~\cite{echeverria:tori}.  As verified more generally in~\cite{ma:signature}, the answers we get agree with those in~\cite{echeverria:tori}. 

\subsection{Branched covers--the branch set}\label{S:branched} The product case discussed in the proof of Theorem~\ref{T:TL} has a nice generalization. 
Consider a knot $K$ in a homology sphere $Y$. Then the $n$-twist spin~\cite{zeeman:twist} of $K$ is a fibered knot in a homology $4$-sphere.  (The description of twist-spinning in~\cite[\S 2]{litherland:deform} applies to a knot in an arbitrary homology sphere.) Surgery along this knot  produces a homology $\sss$ that we will call $X$. It is fibered with fiber $\Sigma$, the $n$-fold branched cover of $(Y,K)$, and the monodromy is a generator $\tau$ of the covering transformations. Let $J$ be the preimage of $K$ in $\Sigma$; it is the fixed point set of $\tau$ and hence sweeps out a torus $T$ in $X$.  The twist-spinning construction is not strictly necessary here; one could instead pass directly to the definition of $(X,T)$ as the (pairwise) mapping torus of $\tau$ on $(\Sigma,J)$. In~\cite{echeverria:tori} this construction was denoted $(X_\tau,T_\tau)$.

It seems natural to expect that $\sigma_\alpha(X,T)$ should be expressible in terms of the \TL\  invariants of $(Y,K)$, and we show that this is indeed the case by establishing a relation between the $\rho$ invariants of surgery on $Y$ along $K$ and those of surgery on $\Sigma$ along $J$.  The ideas here are well-known; compare~\cite{aps:II,casson-gordon:stanford}. Let $Y_0$ be the result of $0$-framed surgery along $K$, and let $\Sigma_0$ be its $n$-fold cyclic cover. It is standard that $H_1(\Sigma_0)$ splits as $\Z \oplus H_1(\Sigma)$, where the first summand is generated by the meridian of $J$.  Writing $d=p^r$, and $\alpha = e^{2 \pi i /d}$ then we want to compute $\sigma_{\alpha^k}(X,T)$. By definition, this is a $\rho$-invariant associated to a $\U(1)$ representation of $H_1(\Sigma_0)$ that vanishes on the second summand and takes the meridian of $J$ to $\alpha^k$.

Since $\Omega_3(\Z) = 0$, there is a $4$-manifold $W$ with $\partial W = Y_0$ and such that the inclusion map $H_1(Y_0) \to H_1(W)$ is an isomorphism. We will use $W$ and its covering spaces to relate the various $\rho$ invariants. Consider the tower of covering spaces 
\begin{equation*}%\label{E:cyclic}
\xymatrix@C+4pc{
W_{dn} \ar[r]^{\Z_d-\text{cover}} \ar@/^2.5pc/@[red][rr]^{\Z_{dn}-\text{cover}}&W_{n} \ar[r]^{\Z_n-\text{cover}}  &W
}
\end{equation*}
and let $t$ denote the generator of the covering transformations of $W_{dn} \to W$. Then $t^n$ generates the covering transformations of $W_{dn} \to W_n$. Write $\omega =  e^{2 \pi i /dn}$, so that $\alpha = \omega^n$. Denote by $E(t,\omega^j)$ the $\omega^j$-eigenspace of $t$ acting on $H_2(W_{dn};\C)$; it is the same as the twisted cohomology $H_2(W;\C_\phi)$ where $\phi$ sends the generator of $H_1(W)$ to $\omega^{-j}$.  Using the symmetries of the \TL\  signature~\cite[Proposition 2.3]{conway:TL}, we have
\[
\sigma_{\omega^j}(Y,K) = \sigma_{\omega^{-j}}(Y,K) = \sign(W) - \sign(E(t,\omega^j))
\]
where the last term means the signature of the Hermitian intersection form on $E(t,\omega^j)$.   There is a similar decomposition of $H_2(W_{dn};\C)$ into eigenspaces $E(t^n,\omega^{nk})$ for the action of $t^n$.
\begin{thm}\label{T:twist}
For any $k$, the invariant $\sigma_{\alpha^k}(X,T)$ is a sum of \TL\  invariants of $K$:
\begin{equation}\label{E:spun TL}
\sigma_{\alpha^k}(X,T) = \sigma_{\alpha^k}(\Sigma,J) = -\sum_{j=1}^{n-1} \sigma_{\omega^{dj}}(Y,K) + \sum_{j=0}^{n-1}\sigma_{\omega^{dj+k}}(Y,K).
\end{equation}
\end{thm}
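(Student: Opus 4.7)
The plan is to establish the two equalities in sequence. For the first equality, I would exploit the mapping-torus structure: since $X$ is the mapping torus of $\tau$ on $(\Sigma,J)$ and $T = J \times S^1$, the complement $X \setminus \nu(T)$ is naturally the mapping torus of the restriction of $\tau$ to $\Sigma \setminus \nu(J)$. One can choose the gluing map $f$ for the torus surgery compatibly with this fibration, making the resulting cohomology $\ts$, namely $V$, the mapping torus of $\tau_0$ on $\Sigma_0$, where $\Sigma_0$ is realized as the $n$-fold cyclic cover of $Y_0(K)$ (equivalently, surgery on $J \subset \Sigma$ with the framing inherited from the lifted $0$-framing of $K$). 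A fiber copy of $\Sigma_0$ is then Poincar\'e dual to $\gamma$ (pulled back from the generator of $H^1(S^1)$), and the $\U(1)$-character of $\pi_1(V)$ restricts on $\pi_1(\Sigma_0)$ to the representation sending the meridian $\mu_J$ of $J$ to $\alpha^k$ and killing the rest. Hence, by Definition~\ref{D:torusTL} and Proposition~\ref{P:eta-TL}, $\sigma_{\alpha^k}(X,T) = \rho_{\phi_{\alpha^k}}(\Sigma_0) = \sigma_{\alpha^k}(\Sigma, J)$.

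For the explicit formula, pick a compact $4$-manifold $W$ with $\partial W = Y_0$ and $H_1(Y_0) \to H_1(W) \cong \Z$ an isomorphism; this is possible because $\Omega_3(\Z) = 0$ and one may do surgery on loops in $\ker[\pi_1(W)\to\Z]$. Let $t$ generate the deck transformations of $W_{dn} \to W$, so that $t^n$ generates those of $W_{dn} \to W_n$. The boundary of $W_n$ is $\Sigma_0$, and the character $\phi_{\alpha^k}$ on $\pi_1(\Sigma_0)$ extends to $\pi_1(W_n)$ as the restriction of the character $\chi\colon \pi_1(W) \cong \Z \to \U(1)$ sending $\mu_K$ to $\omega^k$: indeed, $p_\ast(\mu_J) = n\mu_K$ gives $\chi(\mu_J) = \omega^{nk} = \alpha^k$ as required. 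Consequently $\rho_{\phi_{\alpha^k}}(\Sigma_0) = \sign(W_n) - \sign_{\phi_{\alpha^k}}(W_n)$.

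Decomposing $H_2(W_{dn};\C) = \bigoplus_{j=0}^{dn-1} E(t, \omega^j)$, the $G$-signature theorem for the $\Z_n$-cover $W_n \to W$ identifies $\sign(W_n)$ with the sum of signatures on the eigenspaces where $t^n$ acts trivially, namely $E(t, \omega^{dl})$ for $l = 0, \ldots, n-1$. Similarly, $\sign_{\phi_{\alpha^k}}(W_n)$ is the signature on the $\alpha^k$-eigenspace of $t^n$, which splits as $\bigoplus_{l=0}^{n-1} E(t, \omega^{k+ld})$. By the excerpt's relation $\sigma_{\omega^j}(Y,K) = \sign(W) - \sign E(t, \omega^j)$, each eigenspace signature contributes $\sign(W)$ minus a Levine-Tristram term; subtracting the two sums cancels the $n\sign(W)$ contributions and yields
\[
\rho_{\phi_{\alpha^k}}(\Sigma_0) = -\sum_{l=0}^{n-1} \sigma_{\omega^{dl}}(Y,K) + \sum_{l=0}^{n-1} \sigma_{\omega^{k+ld}}(Y,K),
\]
which matches the stated formula once one uses $\sigma_1(Y,K)=0$ to drop the $l=0$ term of the first sum.

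The main obstacle is the careful bookkeeping: tracking the ramification relation $p_\ast(\mu_J) = n\mu_K$ through the tower of covers in order to match the twisted signatures of $W_n$ with the correct $t$-eigenspaces, and verifying that conjugate eigenspaces $E(t, \omega^j)$ and $E(t, \omega^{-j})$ have equal signatures (which uses the symmetry $\sigma_{\bar\alpha}(Y,K)=\sigma_\alpha(Y,K)$ and legitimizes any reindexing required for the $\alpha^k$-eigenspace of $t^n$). A secondary point is confirming that the mapping-torus-compatible torus surgery on $(X,T)$ falls within the class of $0$-surgeries defined in Section~\ref{S:gluing}; this follows from the freedom in choosing the gluing $f$ together with the observation that a lift of the Seifert $0$-framing of $K$ provides a pushoff of $J$ that is null-homologous in $X \setminus T$.
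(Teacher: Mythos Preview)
Your proposal is correct and follows essentially the same route as the paper: identify the $3$-manifold $M$ in Definition~\ref{D:torusTL} with $\Sigma_0$ via the mapping-torus structure, then compute $\rho_{\phi_{\alpha^k}}(\Sigma_0)$ by choosing $W$ with $\partial W=Y_0$ and decomposing $H_2(W_{dn};\C)$ into $t$-eigenspaces, using $\sigma_{\omega^j}(Y,K)=\sign(W)-\sign E(t,\omega^j)$ and the orthogonal splitting $E(t^n,\omega^{nk})=\bigoplus_{l}E(t,\omega^{dl+k})$. You supply more detail than the paper on the first equality (the mapping-torus description of $V$ and the verification that the induced surgery on the fiber is the correct $0$-surgery on $J$) and on why the character extends over $W_n$, but the substance of the argument is the same.
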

\begin{proof}
The main point is the decomposition
\begin{equation}\label{E:decomp}
E(t^n,\omega^{nk}) = \bigoplus_{j=0}^{n-1}E(t,\omega^{dj+k})
\end{equation}
of $E(t^n,\omega^{nk})$ into eigenspaces of $t$.  Since the different eigenspaces are orthogonal with respect to the intersection form, this translates into an identity on signatures. The definition of the \TL\  signature and the decomposition \eqref{E:decomp} thus give
\[
\begin{aligned}
\sign(E(t,\omega^{dj+k})) &= \sign(W) - \sigma_{\omega^{dj+k}}(Y,K)\\
 \sigma_{\alpha^k}(\Sigma,J) & = \sign(W_n) -\sign(E(t^n,\omega^{nk}))\\
 &=  \sign(W_n) - n \sign(W) + \sum_{j=0}^{n-1}\sigma_{\omega^{dj+k}}(Y,K).
\end{aligned}
\]
Similarly, the decomposition of $H_2(W_n;\C)$ into eigenspaces gives that 
\[ 
\begin{aligned}
\sign(W_n) & =  \sign(W) + \sum_{j=1}^{n-1} j^{th}\ \text{eigenspace signature for }W_n\\
& = n \sign(W) -  \sum_{j=1}^{n-1}\sigma_{\omega^{dj}}(Y,K).
\end{aligned}
\]
Putting these together (note that the signature of $W$ cancels, as expected) gives the statement of the theorem.
\end{proof}
Comparing this calculation with~\cite[Theorem 44]{echeverria:tori}, and accounting for the somewhat different notation, we  see that the expected relation \eqref{E:conj} holds. 
\subsection{Branched covers--periodic knots}\label{S:periodic}
To carry out the mapping torus construction above, all we needed was knot in $\Sigma$ that was invariant under a suitable transformation $\tau$. Here is a more general version. Suppose that $L=(K,J)$ is an oriented link, and let $\pi: \Sigma \to S^3$ be the $n$-fold cover of $S^3$ branched along $K$, with $\tau$ the generator of group of covering transformations. Assuming that $n$ is relatively prime to the linking number of $J$ and $K$, the preimage $\jtilde = \pi^{-1}(J)$ is a knot in $\Sigma$ that is invariant under $\tau$.
\begin{definition}
The torus $T_{L,n}$ is given by the mapping torus 
\[
S^1 \times_\tau \jtilde \subset S^1 \times_\tau \Sigma = X.
\]
\end{definition}
By construction, $T_{L,n}$ is essential, and we calculate its \TL\ invariant. To compute $\sigma_{\alpha}(X,T_{L,n})$ for arbitrary $\alpha \in S^1$, it suffices to compute it for all $\alpha$ of finite (indeed prime-power) order. 
The answer is similar to the formula in Theorem~\ref{T:twist} and is expressed in terms of an extension to links of the \TL\ invariant due to Cooper~\cite{cooper:thesis,cooper:abelian-cover}. This invariant associates to a $2$-component oriented link, and a pair of unit complex numbers $(\omega_1,\omega_2) \in (S^1 - \{1\}) \times (S^1 - \{1\})$ a signature $\sigma_L(\omega_1,\omega_2)$. The scope of $\sigma_L$  was extended by Cimasoni-Florens~\cite{cimasoni-florens:signatures} to include invariants of `colored links', with one $\omega$ for each color. Let us adopt the convention that $\sigma_L(\omega_1,1)$ is the usual \TL\ signature of the first component, and similarly for $\sigma_L(1,\omega_2)$.   
\begin{thm}\label{T:periodic}
If $\alpha \in S^1$ has prime-power order, then the invariant $\sigma_{\alpha^k}(X,T)$ is a sum of \TL\  invariants of $L$:
\begin{equation}\label{E:periodic TL}
\sigma_{\alpha}(X,T_{L,n}) = \sum_{j=0}^{n-1} \sigma_{L}(e^{2\pi i j/n},\alpha) - \sum_{j=1}^{n-1} \sigma_{L}(e^{2\pi i j/n},1).
\end{equation}
\end{thm}
Combined with Ma's theorem~\cite{ma:signature} that we get a calculation of Echeverria's invariant $\lfo(X,T,\alpha)$ in this case. Again the answer is in terms of \TL\ invariants of the link. 
\begin{corollary}\label{C:periodic}
For $\alpha \in S^1$ of prime-power order $d$,
\[
\lfo(X,T_{L,n},\alpha)  = \sum_{j=0}^{n-1} \sigma_{L}(e^{2\pi i j/n},\alpha^2).
\]  
\end{corollary}
To prove the corollary, recall that Ma showed that $\lfo(X,T,\alpha) = 8\lfo(X) + \sigma_{\alpha^2}(X,T)$. On the other hand, the author and Saveliev~\cite{ruberman-saveliev:mappingtori} computed 
\[
\lfo(X) = \frac18 \sum_{j=1}^{n-1} \sigma_{L}(e^{2\pi i j/n},1)
\]
which cancels the second term in \eqref{E:periodic TL}.
\begin{proof}[Proof of Theorem~\ref{T:periodic}]
The proof is similar in spirit to that of Theorem~\ref{T:twist}, with some additional signature calculations. The details are basically an adaptation of well-known arguments, notably the `$4$-dimensional interpretation' of $\sigma_L(1,\omega_2)$ as discussed in in~\cite[\S 6]{cimasoni-florens:signatures}, and so we will be a bit sketchy in our treatment. To simplify notation a little, write $T$ for the torus $T_{L,n}$.

Choose a pair of transversally intersecting surfaces $F_K$ and $F_J$ in $B^4$ with boundary $L$.  The $n$-fold cyclic branched covering $\pi: \Sigma \to S^3$ over $K$ extends to branched cover $\pi:W_n \to B^4$ branched over $F_K$.  Let $\jtilde$ (resp. $\ftilde_J$) denote the preimage of $J$ (resp. $F_J$). Denote by $\Sigma_0(\jtilde)$ the $0$-framed surgery along $\jtilde$ in $\Sigma$, where the $0$-framing is the one that extends over $\ftilde_J$. Note that $\Sigma_0(\jtilde)$ embeds in $V$, the result of $0$-surgery on $T$, and is Poincar\'e dual to $\gamma \in H^1(V)$, so that by definition,
\[
\sigma_\alpha(X,T) = \rho_{\phi_\alpha}(\Sigma_0(\jtilde)).
\]

In principle, to compute this one would need a $4$-manifold over which the $d$-fold cyclic covering of $\Sigma_0(\jtilde)$ corresponding to the character $\phi_\alpha$ extends. But a now-standard argument~\cite[\S 2]{casson-gordon:stanford} shows that it can be computed instead by extending to a branched covering. In addition to the equivariant signatures of the branched cover, there is a term proportional to the self-intersection of the branch set. Let $W_n^0$ denote the result of adding a $0$-framed $2$-handle to $W_n$ along $\jtilde$.  Then the covering corresponding to $\phi_\alpha$ extends to a branched covering $W_{nd}^0 \to W_n^0$ with branch set equal to the union of $\ftilde_J$ with the core of the $2$-handle. 

By construction, the self-intersection of the branch set is $0$. Hence $\rho_{\phi_\alpha}(\Sigma_0(\jtilde))$ can be computed as the difference between the signature of $W_n^0$ and an appropriate equivariant signature of $W_{nd}^0$. It is straightforward to prove (from the choice of framing being $0$) that the signature of $W_n^0$ is the same as the signature of $W_n$, and that the same is true for the equivariant signatures of $W_{nd}^0$ and $W_{nd}$, the $d$-fold cover of $W$ branched along $\ftilde_J$.

Decomposing the homology of $W_n$ into eigenspaces for the action of the covering transformation from the branched covering $W_n \to B^4$ gives that 
\[
\sign(W_n)  = \sum_{j=1}^{n-1} \sigma_{L}(e^{2\pi i j/n},1).
\]

Similarly, $W_{nd}$ is an $nd$-fold branched cover over $B^4$ with branch set $F_K \cup F_J$, and so its homology has an action of $\Z_n \times \Z_d$. In particular, each eigenspace for the $\Z_d$ action is preserved by the $\Z_n$ action, and so has a further eigenspace decomposition. It follows that the equivariant signature of $W_{nd}$ is given by
\[
\sum_{j=0}^{n-1} \sigma_{L}(e^{2\pi i j/n},\alpha) 
\]
and the theorem follows.
\end{proof}

\subsection{Circle bundles}\label{S:bundle} Some time ago, Baldridge suggested to the author and Saveliev the following construction of a homology $\sss$, which was discussed in~\cite{ruberman-saveliev:survey}. Let $N$ be a homology $S^1 \times S^2$, with a generator $\gamma_{N} \in H^1(N)$. Let $\pi:X \to N$ be the oriented circle bundle  whose Euler class evaluates to $1$ on the Poincar\'e dual of $\gamma_Y$. Then $X$ is a homology $\sss$, and $\gamma= \pi^*\gamma_N$ is a homology orientation.  In order for $\lfo(X)$ to be defined, $N$ must have the $\Z[\Z]$ homology of $S^1 \times S^2$, and that restriction implies~\cite[\S 8]{ruberman-saveliev:survey} that in fact $\lfo(X) = 0$.

In this same situation, Echeverria constructs a torus embedded in $X$ and conjectures (see Conjecture 51) that for $\alpha$ for which it is defined, his invariant is $0$. Here is a description that is equivalent to his, but easier for our purposes. Consider an oriented knot $J$ in $N$ such that $\la \gamma_N, J\ra = 1$. Then let $T = \pi^{-1}(J)$. This of course depends on the precise choice of $J$, but the calculation does not. 
\begin{proposition}\label{P:bundle} For any $\alpha \in S^1$, the invariant $\sigma_\alpha(X,T)$ vanishes.
\end{proposition}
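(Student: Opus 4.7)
The plan is to exploit the $S^1$-bundle structure of $X$ to produce an explicit $3$-manifold $M\subset V$ dual to $\gamma$ that bounds a $4$-manifold $W$ whose middle-dimensional ordinary and twisted homology both vanish.

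Let $N_J = N\setminus\Int(\nu(J))$. Since $J$ generates $H_1(N)$, the long exact sequence of the pair $(N,N_J)$ gives $H^2(N_J)=0$, so the restricted circle bundle over $N_J$ is trivializable. Fixing a trivialization identifies $\pi^{-1}(N_J)$ with $N_J\times S^1_\sigma$ (where $\sigma$ denotes a fiber circle) and $\nu(T)$ with $T^2\times D^2$; in this model the $0$-surgery producing $V$ reglues a new $T^2\times D^2$ so that the boundary of its meridian disk is identified with $\sigma$, the null-homologous direction on $T$.

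Next, choose an embedded oriented surface $\Sigma\subset N$ representing $\mathrm{PD}(\gamma_N)$ and meeting $J$ transversely in one point, and set $\Sigma^\circ = \Sigma\setminus\Int(D^2)$, a genus-$g$ surface whose boundary is the meridian $\mu$ of $J$. Inside $V$ form the closed $3$-manifold
\[
M \;=\; (\Sigma^\circ\times S^1_\sigma) \;\cup_{T^2}\; (S^1_\mu\times D^2),
\]
where the first piece sits in $N_J\times S^1_\sigma$ and the second is the ``$\mu$-core'' solid torus inside the new $T^2\times D^2$; the two meet along $\partial\Sigma^\circ\times S^1_\sigma = S^1_\mu\times\partial D^2 = T^2$. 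A Mayer--Vietoris check then confirms $[M] = \mathrm{PD}(\gamma)\in H_3(V)$, so $M$ is a legitimate choice in Definition~\ref{D:torusTL}.

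The crucial observation is that $M = \partial W$ for $W = \Sigma^\circ\times D^2$, since
\[
\partial(\Sigma^\circ\times D^2) = (\partial\Sigma^\circ\times D^2)\cup(\Sigma^\circ\times\partial D^2) = (S^1_\mu\times D^2)\cup(\Sigma^\circ\times S^1_\sigma).
\]
Because $\Sigma^\circ$ deformation retracts to a wedge of $2g$ circles, $W$ has the homotopy type of a $1$-complex, so $H_2(W;\C)=0$ and $H_2(W;\C_{\phi_\alpha})=0$ for every $\alpha\in S^1$. The inclusion $M\hookrightarrow W$ induces an isomorphism on $\pi_1$ (both free of rank $2g$), so the restriction of $\phi_\alpha$ to $M$ extends to $W$. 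Hence $\sign(W)=0=\sign_{\phi_\alpha}(W)$, giving $\rho_{\phi_\alpha}(M)=0$ and therefore $\sigma_\alpha(X,T)=\rho_\alpha(V,\gamma)=0$ uniformly in $\alpha$, so that the averaging procedure is not needed. The main obstacle lies in the geometric identification of step two, tracking precisely how the Poincar\'e dual surface $\Sigma$ interacts with the torus $0$-surgery; once this is in place the bounding $4$-manifold $W=\Sigma^\circ\times D^2$ and the vanishing of all signatures are essentially automatic.
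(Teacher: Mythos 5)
The proposal runs into trouble at the crucial geometric step you flag yourself, namely identifying how the $0$-surgery interacts with the bundle trivialization. You assert that in the trivialization $\pi^{-1}(N_J)=N_J\times S^1_\sigma$ the $0$-surgery reglues $T^2\times D^2$ with $\partial D^2$ identified with $\sigma$. That is not the $0$-surgery of Section~\ref{S:gluing}: there $f(m)=\psi(l)$, where $\psi(l)$ is the pushoff of $l=\sigma$ chosen to be null-homologous in $X-T$. In the $N_J$-trivialization, the curve $\{\text{pt}\}\times S^1_\sigma\subset\partial\nu(T)$ is \emph{not} null-homologous in $X - T = N_J\times S^1_\sigma$ (it is a free generator of $H_1$). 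Because the bundle has Euler number $1$, the $\nu(J)$-trivialization and the $N_J$-trivialization of $\pi^{-1}(\partial\nu(J))$ differ by a Dehn twist along $\mu_J$, so the actual meridian $\mu_T$ equals $\bar\mu+\sigma$ (where $\bar\mu=\mu_J\times\{\text{pt}\}$), and the linking-number-zero pushoff works out to $\psi(\sigma)=\sigma-\mu_T=-\bar\mu$, i.e.\ (up to sign) the meridian $\mu_J$ rather than $\sigma$. This is exactly the ``$0$-framed pushoff $=$ blackboard framing $1$'' phenomenon in Figure~\ref{F:fiber}.

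With your proposed gluing $m\mapsto\sigma$, a Mayer--Vietoris computation gives $H_1(V)\cong\Z$, so $V$ is not a cohomology $\ts$ and the framework of Section~\ref{S:rho} does not apply; correspondingly your $M\cong\partial(\Sigma^\circ\times D^2)\cong\#_{2g}(S^1\times S^2)$ has $H_1\cong\Z^{2g}$, whereas the actual surgered submanifold dual to $\gamma$ is $S^1\times F$ with $H_1\cong\Z^{2g+1}$. The bounding $4$-manifold in the paper's argument is therefore not $W=\Sigma^\circ\times D^2$ but $W=S^1\times H$ with $H$ a handlebody filling $F$. Your overall strategy (exhibit an explicit $M$ dual to $\gamma$ that bounds a $W$ over which $\phi_\alpha$ extends with vanishing ordinary and twisted signatures, making $\rho_{\phi_\alpha}(M)=0$ for all $\alpha$ and bypassing the averaging) is the same as the paper's, and the fix is localized: once the Euler-class twist in $\psi(l)$ is incorporated, the filling solid torus in the new $T^2\times D^2$ has core in the fiber direction rather than the $\mu$-direction, $M$ becomes $S^1\times F$, and you should replace $W$ by $S^1\times H$.
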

\begin{proof}
To compute $\sigma_\alpha(X,T)$, we need a submanifold $M$ dual to $\gamma$.  Choose an oriented surface $F \subset N$ that is Poincar\'e dual to $\gamma_N$, and let $M = \pi^{-1}(N)$. It is readily verified that $M$ is then Poincar\'e dual to $\gamma$.  

By construction, $M$ is the circle bundle over $F$ with Euler class $1$, and the Gysin sequence shows that its fibers are null-homologous in $N$ (and of course in $X$ as well). In particular, the fiber of $\pi:T \to J$ is the curve $l$ appearing in the definition of $\sigma_\alpha(X,T)$. When we do a $0$-surgery on $T$, the resulting surgery on $M$ is therefore the $0$-framed surgery along $J$. The result of this surgery is just $S^1 \times F$. This may be verified by careful thinking about the clutching function for the bundle $\pi:M \to F$ but some readers may prefer the handle picture presented momentarily. In either event, the $\rho$ invariants for $S^1 \times F$ all vanish, because any representation $\phi_\alpha$ will extend over $S^1 \times H$ where $H$ is a handlebody.

To verify that surgery on $M$ is a product, consider the following standard picture~\cite{gompf-stipsicz:book} of the Euler class $1$ circle bundle over $F$. If F has genus $g$, then the pattern is repeated $g$ times; the red curve at the right of the picture is the fiber.  
\begin{figure}[h]
\labellist
\small\hair 2pt
\pinlabel {$1$} [ ] at  168 48
\pinlabel {$0$} [ ] at  40 94
\pinlabel {$0$} [ ] at  30 54 
\pinlabel {$0$} [ ] at 106 54
\pinlabel {$0$} [ ] at 113 94
\pinlabel {$g$} [ ] at 85 80
\pinlabel {{\color{red}{$l$}}} [ ] at 175 20
\endlabellist
    \centering
        \includegraphics[scale=1.8]{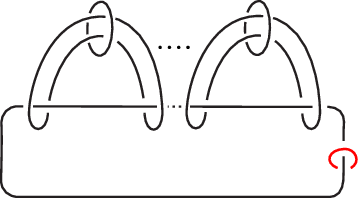}
        \caption{Fiber $l$ in the Euler class $1$ circle bundle over $F_g$}
        \label{F:fiber}
\end{figure}
The $0$ framed pushoff of $l$ is the one that has blackboard framing $1$ in the picture, so we are doing surgery on $l$ with coefficient $1$. Blow down the red curve,  changing the framing of the curve that it links to $0$, and  the figure becomes the standard picture of $S^1 \times F$. 
\end{proof}

\bibliography{torusTL}
\bibliographystyle{amsplain}

\end{document}